\newtheorem{assumption}{Assumption}
\newtheorem{remark}{Remark}
\newtheorem{example}{Example}
\newcommand{\veps}{\varepsilon}
\newcommand{\psie}{\psi^{\veps}}
\newcommand{\hpsie}{\hat{\psi}^{\veps}}
\newcommand{\psieo}{\varphi^{\veps}_0}
\newcommand{\hpsieo}{\hat{\varphi}^{\veps}_0}
\newcommand{\ze}{\zeta}
\newcommand{\dbx}{\,\mathrm{d}x}
\newcommand{\dby}{\,\mathrm{d}y}
\newcommand{\dbp}{\,\mathrm{d}p}
\newcommand{\dbq}{\,\mathrm{d}q}
\newcommand{\dbP}{\,\mathrm{d}P}
\newcommand{\dbQ}{\,\mathrm{d}Q}
\newcommand{\dxi}{\,\mathrm{d}\xi}
\newcommand{\dze}{\,\mathrm{d}\ze}
\newcommand{\dtau}{\,\mathrm{d}\tau}
\newcommand{\amp}{{a}}
\newcommand{\p}{\partial}
\newcommand{\f}[2]{\frac{#1}{#2}}
\newcommand{\pa}{\partial_a}
\newcommand{\pab}{\partial^2_{ab}}
\newcommand{\ddt}[1]{\frac{\rd{#1}}{\rd t}}
\newcommand{\abs}[1]{\left|{#1}\right|}
\newcommand{\norm}[1]{\left\|{#1}\right\|}
\newcommand{\e}{\mathrm{e}}
\newcommand{\ri}{\mathrm{i}}
\newcommand{\rd}{\mathrm{d}}
\newcommand{\bbR}{\mathbb{R}}
\newcommand{\bbC}{\mathbb{C}}
\newcommand{\N}{\mathbb{N}}
\newcommand{\FIO}{\mathcal{I}^{\veps}_{\Phi}}
\newcommand{\tFIO}{\mathcal{I}^{\veps}_{\tilde\Phi}}
\newcommand{\FGA}{\mathrm{FGA}}
\newcommand{\FFT}{\mathcal{F}}
\newcommand{\IFT}{\mathcal{F}^{-1}}
\newcommand{\FGAdt}{{\delta,\FGA}}
\newcommand{\kin}{T}
\newcommand{\kindt}{\kin^\delta}
\newcommand{\dkin}[1]{T^{#1}}
\newcommand{\dkindt}[1]{T^{\delta,#1}}
\newcommand{\psiedt}{\psie_\delta}
\newcommand{\FBI}{\mathscr{F}^{\veps}}
\newcommand{\FSE}{FSE}%{fractional Schr\"odinger equation}
\newcommand{\FSEs}{fractional Schr\"odinger equations}
\newcommand{\Tf}{t_\mathrm{f}}
\newcommand{\etal}{{et al.}\;}
\newcommand{\ie}{{i.e.}\;}
\newcommand{\eg}{{e.g.}\;}
\newcommand{\emr}{(r^{d+4}+1)\e^{-r^2}} % temp used for e^{-r} = \veps if choosing r = \log\veps^{-1}
\newcommand{\red}[1]{\textcolor{red}{#1}}
\title{Frozen Gaussian approximation for the fractional Schr\"odinger equation}
\author{
	Lihui Chai\footnote{School of Mathematics, Sun Yat-sen University, Guangzhou, 510275, China (chailihui@mail.sysu.edu.cn).} 
	\and Hengzhun Chen\footnote{School of Mathematical Sciences, Fudan University, Shanghai, 200433, China (hengzhunchen21@m.fudan.edu.cn).}
	\and Xu Yang\footnote{Department of Mathematics, University of California, Santa Barbara, CA 93106, USA (xuyang@math.ucsb.edu).}
}
\begin{document}

\numberwithin{equation}{section}
\numberwithin{figure}{section}
\numberwithin{theorem}{section}

\maketitle
\begin{abstract}
    We develop a refined Frozen Gaussian approximation (FGA) for the fractional Schr\"odinger equation in the semi-classical regime, where the solution exhibits rapid oscillations as the scaled Planck constant $\varepsilon$ becomes small. Our approach utilizes an integral representation based on asymptotic analysis, offering a highly efficient computational framework for high-frequency wave function evolution. Crucially, we introduce the momentum space representation of the FGA and a regularization parameter $\delta$ to address singularities in the higher-order derivatives of the Hamiltonian, which are typically assumed to be smooth in conventional analysis. We rigorously prove convergence of the method to the true solution and provide numerical experiments that demonstrate its precision and robust convergence behavior.
\end{abstract}

\section{Introduction}
We consider the Schr\"odinger equation
\begin{equation} \label{eq:Schrodinger}
    \ri\veps \partial_t \psie(t,x)= \hat{H}(x,-\ri\veps\partial_x)\psie(t,x),
    \quad x\in\bbR^d,\;t>0,
\end{equation}
where $0<\veps\ll1$ indicates the semi-classical regime, $\psie(t,x)$ is the complex-valued wavefunction, and the Hamiltonian operator $\hat{H}$ is defined by a pseudo-differential operator \cite{gerard1997homogenization,hormander1987analysis} associated with a symbol $H:\bbR^{2d}\rightarrow\bbR$,
\begin{equation} \label{eq:pseudo_Ham}
    \hat{H}(x,-\ri\veps\partial_x) \psie(x):=\frac{1}{(2 \pi)^{d}} \int_{\mathbb{R}^{2d}} \,\e^{\ri(x-y)\cdot\xi} H\left(\f{x+y}{2},\veps\xi\right) \psie(y) \dxi\dby.
\end{equation}
We assume that the symbol of the Hamiltonian takes the form $H(x,\xi)=\kin(\xi)+V(x)$,
%\begin{equation}
%	H(x,\xi)=\kin(\xi)+V(x),
%\end{equation}
where $\kin(\xi)$ and $V(x)$ represent the kinetic and potential energy symbols, respectively.

For example, when $\displaystyle\kin(p)=\f{p^2}{2}$, %$H(x,-\ri\veps\partial)=-\f{\veps^2}2\Delta+V(x)$, 
one has the standard semi-classical Schr\"odinger equation. When $\displaystyle\kin(p)=\f{|p|^\alpha}{\alpha}$ for $1 \leq \alpha \leq 2$, %$H(x,-\ri\veps\partial)=:-\f{\veps^\alpha}\alpha\Delta^{\alpha/2}+V(x)$, 
one has the fractional Schr\"odinger equation (\FSE),
\begin{equation} \label{eq:FSE}
    \ri\veps\p_t\psie = -\frac{\veps^{\alpha}}{\alpha}\Delta^{\alpha/2}\psie + V(x)\psie.
\end{equation}
%where, for $1 \leq \alpha \leq 2$, the fractional Laplacian is defined as the inverse Fourier transformation of the $|\xi|^{\alpha} \hat{\psi}(\xi)$: 
%\begin{equation} \label{eq:frac_lap}
%-(\veps^2\Delta)^{\alpha / 2} \psi(x)
%:=\IFT\left[|\xi|^{\alpha} \hat{\psi}(\xi)\right](x) 
%=\frac{1}{(2 \pi)^{d}} \int_{\mathbb{R}^{2d}} \veps^{\alpha} |\xi|^{\alpha} \e^{\ri(x-y) \xi} \psi(y)\dby\dxi,
%\end{equation}
In general, the kinetic operator is given by
\begin{multline} \label{eq:kin-operator}
    \kin(-\ri\veps\p_x) \psi(x)
    :=\IFT\left[\kin(\xi)\hat{\psi}(\xi)\right](x)
    \\= \frac{1}{(2 \pi \veps)^{d}} \int_{\mathbb{R}^{d}} \kin(\xi) \e^{\ri x \xi/\veps} \hat{\psi}(\xi)\dxi
    = \frac{1}{(2 \pi \veps)^{d/2}} \int_{\mathbb{R}^{2d}} \kin(\xi) \e^{\ri(x-y) \xi/\veps} \psi(y)\dby\dxi.
\end{multline}
In the above equations, we have used the Fourier transformation and its inverse transformation denoted as follows:
\begin{align}
     & \FFT[\psi](\xi)  =\hat\psi(\xi) = {(2\pi\veps)^{-\f{d}2}}\!\int_{\bbR^d}\psi(x)\,\e^{-\ri x \xi / \veps} \dbx, \\
     & \IFT[\hat\psi](x) = {(2\pi\veps)^{-\f{d}2}}\!\int_{\bbR^d}\hat\psi(\xi)\,\e^{\ri x \xi / \veps} \dxi.
\end{align}

The semi-classical Schr\"odinger equation \eqref{eq:Schrodinger} has been extensively studied in both theoretical and numerical aspects. Notable theoretical contributions include works on WKB analysis and Wigner measures, such as those by Tartar \cite{tartar1990h}, Lions and Paul \cite{lions1993mesures}, G\'erard \etal \cite{gerard1997homogenization}, and Carles \cite{carles2008semi}. On the numerical side, methods such as time-splitting spectral methods and splitting schemes have been developed by Bao \etal \cite{bao2002time}, Engquist and Runborg \cite{EnRu:03}, Gosse and Jin \cite{gosse2003two}, Lubich \cite{lubich2008splitting}, and Jin \etal \cite{jin2011mathematical}.  These methods typically assume that the Hamiltonian symbol is a polynomial in $\xi$ or possesses a certain degree of smoothness, often up to $C^\infty$. However, for \FSE, when $1\le\alpha<2$, the symbol is only in $C^\alpha$. Although similar methods can be applied formally, their convergence remains unclear due to the lower regularity of the symbol. This poses a significant challenge in both the theoretical analysis and the development of robust numerical methods for the \FSE.

%\paragraph{Theoretical literature:}
The \FSE~was first introduced by Laskin in \cite{laskin2000fractionalE,laskin2002fractional,laskin2000fractionalA}, where the path integral was generalized from Brownian-type quantum mechanics trajectory to L\'evy-type quantum mechanics trajectory, to represent the Bohr atom, and fractional oscillator, and to study the quantum chromodynamics (QCD) problem of quarkonium.
The \FSE~with $\alpha=1$ can also serve as a toy model for the mathematical description of the dynamics of semi-relativistic boson stars in the mean-field limit \cite{frohlich2007boson,lenzmann2007well}.
%Local well-posedness and ill-posedness in Sobolev spaces of \FSEs with power-type nonlinearities were established in \cite{hong2015fractional}.
%\paragraph{Numerical literature:}
Numerically, the \FSE~has been studied using various methods. Finite difference methods have been employed \cite{HuYaOb2014}, and time-splitting spectral methods have been developed \cite{duo2016mass,wang2022lie}.
Properties of nonlinear \FSEs, such as global existence, the possibility of finite time blow-up, the existence and stability of the ground states, and the decoherence of solutions, have been explored using Fourier spectral methods \cite{klein_numerical_2014,antoine_ground_2016,kirkpatrick2016fractional}.
Additionally, numerical methods with perfectly matched layers (PMLs) in both real-space and Fourier-space have been proposed for linear \FSEs~\cite{antoine_derivation_2021}.

%The semi-classical limit of the \FSE~has been considered in \cite{Stickler2011} using Wigner transformation.

In this paper, we derive a frozen Gaussian approximation (FGA) for the \FSE~and show the convergence in the semi-classical regime. The FGA was originally used in quantum chemistry \cite{He:81,HeKl:84} and was systematically justified for the Schr\"odinger equation in \cite{Ka:94,Ka:06,SwRo:09,LaSa:17}. Subsequently, the FGA theory was extended to linear strictly hyperbolic systems by pioneering works \cite{LuYa:11,LuYa:CPAM} and more recently to non-strictly hyperbolic systems, including elastic wave equations \cite{hate2018fga}, relativistic Dirac equations in the semi-classical regime \cite{ChLo:19,Chai2021fgaconv}, and non-adiabatic dynamics in surface hopping problems \cite{LuZh:2016FGAsh,LuZh:2018FGAsh,Huang2022}.
%The FGA has been applied to seismic tomography as an efficient forward model solver and has shown its advantage in both accuracy and parallelism \cite{chai2017frozen,Chai2021fgaconv}. 

The FGA propagates the wavefield through the classical ray center and complex-valued amplitude which are determined by the associated Hamiltonian flow. Given that the coefficients of the equation are sufficiently smooth (at least in $C^2$), one can derive the FGA equations along the ray path using asymptotic expansion with respect to the semi-classical parameter $\veps$ and prove the convergence in the first-order of $\veps$ (higher-order convergence can be achieved with further expansion). However, when some coefficient is not in $C^2$, such as in the \FSE~with $1<\alpha<2$, the expansion may break down, leading to the propagation of the ray path governed by the Hamiltonian system with singularities. In this scenario, the convergence of the FGA is not guaranteed by existing theory. To address this issue, we introduce a regularization parameter $\delta$ when deriving the FGA formulation. By carefully analyzing the residual terms in the expansion and tuning the parameter $\delta$ in conjunction with $\veps$, we can bound the high-order derivatives of the Hamiltonian flow and obtain a convergence result. Another challenge in dealing with the fractional Hamiltonian system is the difficulty in estimating certain terms corresponding to the fractional Laplacian when performing asymptotic expansion in position space. Therefore, we propose an asymptotic expansion for the fractional Laplacian in the momentum space (Lemma \ref{lemma:xi-P}), which yields a compact form of the remainder terms, facilitating convergence analysis.

The rest of this paper is organized as follows: Section 2 introduces the FGA for the \FSE, presenting preliminary results and a formal derivation while leaving the low-regularity issue of the Hamiltonian symbol unaddressed. In Section 3, we tackle the low-regularity issue using a regularization technique and rigorously analyze the convergence of the FGA. The convergence result depends on the spatial dimension $d$ and the fractional order $\alpha$. Numerical experiments demonstrating the performance of the FGA are presented in Section 4, and we conclude with remarks in Section 5.

\smallskip
\noindent \textbf{Notations.}
%\begin{itemize}
1. The absolute value, Euclidean distance, vector norm, induced matrix norm, and the sum of components of a multi-index will all be denoted by $|\cdot|$.

\smallskip
\noindent 2. For a vector $v\in\bbR^d$, $v^2_{jk}$ denotes the $(j,k)$-th element $v_jv_k$ of the rank 2 tensor $v\otimes v$, and $v^3_{jkl}$  denotes the $(j,k,l)$-th element $v_jv_kv_l$ of the rank 3 tensor $v\otimes v\otimes v$.
%In general, for an positive integer $n$ and $1\leq j_1,\cdots,j_n\leq d$, $v^n_{j_1,\cdots,j_n}=v_{j_1}\cdots v_{j_n}$, and  $\partial^n_{v_{j_1,\cdots,j_n}}=\partial_{v_{j_1}}\cdots\partial_{v_{j_n}}$.

\smallskip
\noindent 3. We use $\mathcal{S}$ and $C^{\infty}$ to denote the Schwartz class functions and smooth functions, respectively.
%\item Subscripts will sometimes be used to specify the dependence of a constant on the parameters. 
%\item $\kin(P)$ denotes the kinetic symbol, and $T$ as the final time of evolution. The specific meanings of these symbols will be evident within the given context.
%\end{itemize}

\section{The frozen Gaussian approximation}\label{sec:FGA}
In this section, we introduce the necessary concepts, assumptions, and preliminary results that will be utilized in deriving the frozen Gaussian approximation (FGA). While some of these have been systematically introduced in existing literature, such as \cite{LuYa:11,LuYa:CPAM}, we provide a concise overview for clarity. Subsequently, we present the initial formulation of the FGA for the \FSE, and in Section 3, we will demonstrate its further modified version and convergence.

\subsection{Hamiltonian flow and action}\label{sec:Hflow}

The Hamiltonian flow plays a crucial role in the FGA. Given the Hamiltonian of \FSE~\eqref{eq:FSE}
\begin{equation} \label{eq:hamiltonian}
    H(Q, P) = \frac{1}{\alpha} |P|^{\alpha} + V(Q) := \kin(P) + V(Q).
\end{equation}
Let us introduce $\nabla H$, $\nabla^2 H$, and $J$ as follows:
$$
    \nabla H = \begin{pmatrix}\p_QV\\ \p_PT\end{pmatrix} , \quad
    \nabla^2 H = \begin{pmatrix}\p^2_QV & \\ &\p^2_PT\end{pmatrix}, \quad
    J= \begin{pmatrix} 0 & \mathrm{Id}_d \\ -\mathrm{Id}_d & 0 \\ \end{pmatrix},
$$
where $\mathrm{Id}_d$ is the $d$-by-$d$ identity matrix.
Then the Hamiltonian flow is defined by the map
$$
    \kappa(t): \quad
    \begin{aligned}
        \mathbb{R}^{2 d} & \rightarrow \mathbb{R}^{2 d}                \\
        (q, p)           & \mapsto \left(Q(t, q, p), P(t, q, p)\right)
    \end{aligned} \; ,
$$
where $(Q(t,q,p), P(t,q,p))$ satisfy the system of ODEs:
%\begin{subequations}
\begin{align}\label{eq:hamiltonian-flow}
    \ddt{}
    \begin{pmatrix}Q\\P\end{pmatrix} =  J\nabla H, \quad
    \left.\begin{pmatrix}Q\\P\end{pmatrix}\right|_{t=0}= \begin{pmatrix}q\\p\end{pmatrix}.
    %\ddt{Q}&= \partial_P H(Q, P)=\partial_P \kin(P),  ~~~~~~~~~Q(0, q, p)=q, \label{eq:hamiltonianQ}\\
    %\ddt{P}&= -\partial_Q H(Q, P)=-\partial_Q V(Q),~~~~P(0, q, p)=p. \label{eq:hamiltonianP}
\end{align}
% with initial conditions 
% \begin{equation}
%     Q(0, q, p)=q \quad\text{and}\quad P(0, q, p)=p.    
% \end{equation}
%\end{subequations}
It can be shown (see \eg \cite{LuYa:CPAM}) that the map $\kappa(t)$ is a canonical transformation defined as follows:

\begin{definition}[Canonical Transformation] \label{def:canonical}
    Let $\kappa: \mathbb{R}^{2d}\rightarrow \mathbb{R}^{2d}$ be a differentiable map
    $
        \kappa(q, p)=(Q(q, p), P(q, p)).
    $
    We denote the Jacobian matrix as
    \begin{equation}\label{eq:F}
        F(q, p)= \begin{pmatrix}
            (\partial_q Q)^{T}(q, p) & (\partial_p Q)^{T}(q, p) \\
            (\partial_q P)^{T}(q, p) & (\partial_p P)^{T}(q, p) \\
        \end{pmatrix}.
    \end{equation}
    We say that $\kappa$ is a canonical transformation if for any $(q, p)\in \mathbb{R}^{2d}$, $F$ is symplectic , \ie
    \begin{equation}\label{eq:Fsympletic}
        F^T J F = J.
        %\quad\text{in which, } 
        %J= \begin{pmatrix} 0 & \mathrm{Id}_d \\ -\mathrm{Id}_d & 0 \\ \end{pmatrix}   
        %\text{ and  } ~\mathrm{Id}_d \text{ is the }  d\text{-by-}d \text{ identity matrix}.
        %		F^{T} \begin{pmatrix}
        %		0 & \mathrm{Id}_d \\
        %		-\mathrm{Id}_d & 0 \\
        %		\end{pmatrix} F 
        %		= \begin{pmatrix}
        %		0 & \mathrm{Id}_d \\
        %		-\mathrm{Id}_d & 0 \\
        %		\end{pmatrix},
    \end{equation}
    %where $\mathrm{Id}_d$ is the $d$-dimensional identity matrix.
\end{definition}
%To facilitate the analysis of the FGA, it is convenient to introduce the operator $\partial_z$ and the matrix $Z(t,q,p)$ associated with canonical transformation $\kappa(q,p)$, defined as follows:
%\begin{equation}
%\partial_z = \partial_q - \ri \partial_p, \quad Z(t,q,p) = \partial_z \left(Q(t,q,p) + \ri P(t,q,p)\right).
%\end{equation}
For the convenience of the analysis of the FGA, we introduce the operator $\partial_z: = \partial_q - \ri \partial_p$ and the matrix $Z(t,q,p):= \partial_z \left(Q(t,q,p) + \ri P(t,q,p)\right)$ associated with canonical transformation $\kappa(q,p)$.
%Furthermore, we present the following Lemma, the proof of which relies solely on the symplecticity of the Jacobian matrix $F(q,p)$ associated with the canonical transformation $\kappa(q,p)$. For detailed proof, please refer to \cite{LuYa:CPAM}.
Moreover, we assert the invertibility of $Z$ and provide the definition of the action associated with Hamiltonian flow $(Q, P)$. For a detailed proof, we refer the reader to \cite{LuYa:CPAM}.
\begin{lemma} \label{lemma:invert_Z}
    $Z(t,q,p)$ is invertible for $(q,p)\in \bbR^{2d}$ with $|\det(Z(t,q,p)| \geq 2^{d/2}$.
\end{lemma}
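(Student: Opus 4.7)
The plan is to reduce the determinant bound to the operator-theoretic statement that the Hermitian matrix $Z^* Z - 2 \, \mathrm{Id}_d$ is positive semidefinite, after which the claim follows from $|\det Z|^2 = \det(Z^* Z) \geq 2^d$.

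Writing $A = \partial_q Q$, $B = \partial_p Q$, $C = \partial_q P$, $D = \partial_p P$ as $d \times d$ real matrices, the definition unfolds to $Z = (A + D) + \ri (C - B)$. I would then extract from $F^T J F = J$ (with $J$ denoting the symplectic matrix in Definition~\ref{def:canonical}) and the dual relation $F J F^T = J$ the algebraic identities
\begin{equation*}
A^T B = B^T A, \quad C^T D = D^T C, \quad A^T D - B^T C = \mathrm{Id}_d,
\end{equation*}
together with the transposed forms $A C^T = C A^T$, $B D^T = D B^T$, and $D A^T - B C^T = \mathrm{Id}_d$. These identities are what generates the additive constant inside $Z^* Z$.

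Direct expansion of $Z^* Z$ splits into a real symmetric part and an imaginary antisymmetric part. Using $A^T D + D^T A = 2 \, \mathrm{Id}_d + B^T C + C^T B$, the real part collapses to $A^T A + B^T B + C^T C + D^T D + 2 \, \mathrm{Id}_d$, the cross terms $B^T C + C^T B$ cancelling against those produced by $(C-B)^T (C-B)$. The imaginary part, using $A^T B = B^T A$ and $C^T D = D^T C$, simplifies to $\ri\bigl[(A^T C - C^T A) + (B^T D - D^T B)\bigr]$. The decisive observation is then the identification
\begin{equation*}
Z^* Z - 2 \, \mathrm{Id}_d = (A + \ri C)^* (A + \ri C) + (B + \ri D)^* (B + \ri D),
\end{equation*}
which is manifestly a sum of positive semidefinite Hermitian matrices. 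Hence every eigenvalue of $Z^* Z$ is at least $2$, and $|\det Z|^2 = \det(Z^* Z) \geq 2^d$ follows.

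The main obstacle is recognising this Cholesky-type factorisation. The imaginary antisymmetric cross terms do not vanish in general, and a naive Cauchy--Schwarz estimate on them would lose a constant and fail to deliver the sharp bound $2^{d/2}$. The coincidence that $A^T C - C^T A$ combines with the diagonal $A^T A + C^T C$ to form the Hermitian square $(A + \ri C)^* (A + \ri C)$ (and analogously for the $B, D$ pair) is precisely what makes the lower bound sharp, and it is the one step that cannot be replaced by routine symplectic bookkeeping.
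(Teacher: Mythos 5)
Your proof is correct: the symplectic identities you extract are the right ones, the factorisation $Z^*Z - 2\,\mathrm{Id}_d = (A+\ri C)^*(A+\ri C) + (B+\ri D)^*(B+\ri D)$ checks out against the expansion of $\bar{Z}^T Z$ (using $A^TD - B^TC = \mathrm{Id}_d$, $A^TB=B^TA$, $C^TD=D^TC$), and the determinant bound follows from the eigenvalue estimate. The paper itself omits the proof and defers to \cite{LuYa:CPAM}, whose argument likewise rests solely on the symplecticity of $F$, so your route is essentially the standard one.
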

%Next, we give the definition of action associated with Hamiltonian flow $(Q, P)$.
\begin{definition}[Action] \label{def:action}
    Suppose $\kappa$ is a canonical transformation, then a function $S:\mathbb{R}^{2d}\rightarrow \mathbb{R}$ is called an action associated with $\kappa$ if it satisfies
    \begin{align}
        \partial_p S(q, p)  = \partial_p Q(q, p)\cdot P(q, p),
        \quad\text{ and }\quad
        \partial_q S(q, p)  = -p + \partial_q Q(q, p) \cdot P(q, p).
    \end{align}
\end{definition}
\noindent
It is straightforward to verify that the solution $S$ of the equation
\begin{equation}\label{eq:action}
    \ddt{S} = P \cdot \partial_P H(Q, P) - H(Q, P), \quad S(0, q, p)=0.
\end{equation}
is indeed an action associated with $\kappa(t)$.
%which is given by 
%\begin{equation*}
%    S(t,q,p) = \int_0^t P(\tau,q,p) \cdot \frac{\mathrm{d}Q(\tau,q,p)}{\mathrm{d}\tau} - H(Q(\tau, q, p), P(\tau, q, p)) \,\mathrm{d}\tau.
%\end{equation*} 

\subsection{Fourier integral operator}\label{sec:FIO}

The FGA can be formulated using the Fourier integral operator, a commonly used definition in the relevant literature:
\begin{definition}[Fourier Integral Operator]
    For Schwartz-class functions $u\in \mathcal{S}(\mathbb{R}^{2d}; \mathbb{C})$ and $\varphi \in\mathcal{S}(\mathbb{R}^d; \mathbb{C})$, we define the Fourier Integral Operator with symbol $u$ as
    \begin{equation}
        [\FIO(u)\varphi](x)=\frac{1}{(2\pi\veps)^{3d/2}} \int_{\mathbb{R}^{3d}} \e^{\ri\Phi(t,x,y,q,p)/\veps} \,u(q,p)\,\varphi(y) \dby \dbq \dbp,
    \end{equation}
    where the complex-valued phase function $\Phi(t,x,y,q,p)$ is given by
    \begin{multline} \label{eq:Phi}
        \Phi(t,x,y,q,p)=S(t,q,p)+P(t,q,p)\cdot(x-Q(t,q,p))
        \\ + \frac{\ri}{2}|x-Q(t,q,p)|^2-p\cdot(y-q)+\frac{\ri}{2}|y-q|^2.
    \end{multline}
\end{definition}
% \noindent
% One can easily check that the Fourier integral operator satisfies the following property: 
% \begin{proposition} \label{prop:fourier_operator}
%     If $u\in L^{\infty}(\mathbb{R}^{2d};\mathbb{C})$, then for any $t$, $\FIO(u)$ can be extended to a linear bounded operator on $L^2(\mathbb{R}^d; \mathbb{C})$, and we have
%     \begin{equation}
%         ||\FIO(u)||_{\mathscr{L}(L^2(\mathbb{R}^d; \mathbb{C}))} \leq 2^{-d/2} ||u||_{L^{\infty}(\mathbb{R}^{2d}; \mathbb{C})}.
%     \end{equation} 
% \end{proposition}
In preparation for the subsequent derivation and convergence analysis, we say two functions are equivalent if they give the same Fourier integral. Precisely, we introduce the following definition:
\begin{definition} \label{def:sim}
    Given $f(q,p), g(q,p)$ in Schwartz class, we say that $f$ and $g$ are equivalent (under a given phase function $\Phi$), denoted as $f\stackrel{\Phi}{\sim} g$, if for any function $\varphi=\varphi(y)\in\mathcal{S}(\bbR^d;\bbC)$ it holds that $\FIO(f)\varphi = \FIO(g)\varphi$.
    %	\begin{equation}
    %		\FIO(f)\varphi = \FIO(g)\varphi.
    %	\end{equation}
\end{definition}
\noindent
%Furthermore, we refer to Lemma 5.2 in \cite{LuYa:CPAM}, which provides the equivalent asymptotic orders for the terms in the subsequent expansion we will introduce. This lemma plays a crucial role in deriving the standard frozen Gaussian approximation. To ensure clarity in notation, we state the lemma as follows, utilizing the Einstein summation convention for repeated indices:
Furthermore, we state Lemma 5.2 in \cite{LuYa:CPAM}, which plays a crucial role in deriving the conventional FGA, utilizing the Einstein summation convention for repeated indices:
\begin{lemma} \label{lemma:x-Q}
    For any $v(y, q, p)\in\bbR^{d}$
    %, $M(y, q, p)\in\bbR^{d\times d}$, and $N(y, q, p)\in\bbR^{d\times d\times d}$ 
    in Schwartz class, we have
    \begin{align}
        \label{eq:v(x-Q)}
        (x-Q)_j\,v_j \stackrel{\Phi}{\sim} &
        {}-\veps \partial_{z_{k}}\left(v_{j} Z_{j k}^{-1}\right).
        %\\
        %\label{eq:(x-Q)^2}
        %(x-Q)^2_{jk}\,M_{jk} \stackrel{\Phi}{\sim} & ~
        %\veps \partial_{z_l} Q_{j} M_{j k} Z_{k l}^{-1}+\veps^{2} \partial_{z_{m}}\left(\partial_{z_{l}}\left(M_{j k} Z_{k l}^{-1}\right) Z_{j m}^{-1}\right).
        % \\
        % (x-Q)^3_{jkl}\,N_{jkl} \stackrel{\Phi}{\sim}& 
        % {}-\veps^2 \partial_{z_m}\left( \partial_{z_n}Q_j N_{jkl}Z^{-1}_{lm}Z^{-1}_{kn} \right) + \veps^{2} \partial_{z_{m}}\left(\partial_{z_{n}}\left(N_{j k l} (x-Q)_l Z_{k n}^{-1}\right) Z_{j m}^{-1}\right)
        % \\
        % \stackrel{\Phi}{\sim}& 
        % {}-\veps^2 \partial_{z_m}\left( \partial_{z_n}Q_j N_{jkl}Z^{-1}_{lm}Z^{-1}_{kn} \right) -\veps^2 \partial_{z_m}\left( \partial_{z_n}Q_l N_{jkl}Z^{-1}_{jm}Z^{-1}_{kn} \right) \nonumber\\
        % &{} -\veps^2 \partial_{z_m}Q_l \partial_{z_n}\left(N_{jkl}Z^{-1}_{kn}\right)Z^{-1}_{jm} 
        % {} -\veps^{3} \p_{z_r} \left( \partial_{z_{m}}\left(\partial_{z_{n}}\left(N_{j k l} Z_{k n}^{-1}\right) Z_{j m}^{-1}\right)  Z^{-1}_{lr} \right).\nonumber
    \end{align}
    %    \begin{equation} \label{eq:v(x-Q)}
    %        v_j(x-Q)_j \stackrel{\Phi}{\sim}-\veps \partial_{z_{k}}\left(v_{j} Z_{j k}^{-1}\right)        
    %    \end{equation}
    %    and
    %    \begin{equation} \label{eq:(x-Q)^2}
    %        (x-Q)_j M_{jk} (x-Q)_k \stackrel{\Phi}{\sim} \veps \partial_{z_l} Q_{j} M_{j k} Z_{k l}^{-1}+\veps^{2} \partial_{z_{m}}\left(\partial_{z_{l}}\left(M_{j k} Z_{k l}^{-1}\right) Z_{j m}^{-1}\right) .        
    %    \end{equation}
    %    \begin{multline}
    %        (x-Q)_j(x-Q)_k(x-Q)_l \kin_{jkl} \stackrel{\Phi}{\sim} -\veps^2 \partial_{z_m}\left( \partial_{z_n}Q_j \kin_{jkl}Z^{-1}_{lm}Z^{-1}_{kn} \right) \\{} + \veps^{2} \partial_{z_{m}}\left(\partial_{z_{n}}\left(\kin_{j k l} (x-Q)_l Z_{k n}^{-1}\right) Z_{j m}^{-1}\right),
    %    \end{multline} 
    %    or equivalently,  
    %    \begin{multline}
    %        (x-Q)_j(x-Q)_k(x-Q)_l \kin_{jkl} \stackrel{\Phi}{\sim} -\veps^2 \partial_{z_m}\left( \partial_{z_n}Q_j \kin_{jkl}Z^{-1}_{lm}Z^{-1}_{kn} \right) -\veps^2 \partial_{z_m}\left( \partial_{z_n}Q_l \kin_{jkl}Z^{-1}_{jm}Z^{-1}_{kn} \right) \\
    %        {} -\veps^2 \partial_{z_m}Q_l \partial_{z_n}\left(\kin_{jkl}Z^{-1}_{kn}\right)Z^{-1}_{jm} \\
    %        {} -\veps^{3} \p_{z_r} \left( \partial_{z_{m}}\left(\partial_{z_{n}}\left(\kin_{j k l} Z_{k n}^{-1}\right) Z_{j m}^{-1}\right)  Z^{-1}_{lr} \right).
    %    \end{multline}     
    Here $Z^{-1}$ denotes the inverse matrix of $Z$. More precisely, $Z_{jk}=\p_{z_j}(Q_k+\ri P_k)$, and the $(j,k)$-th entry of $Z^{-1}$ is denoted as $Z^{-1}_{jk}$.
\end{lemma}
%Unlike the conventional asymptotic order analysis in frozen Gaussian approximation, which relies on the phase function \eqref{eq:Phi} and its equivalent asymptotic order Lemma \ref{lemma:x-Q}, we go beyond that by introducing the concept of the dual phase function and its corresponding action function. This extension allows us to effectively handle the kinetic term in the fractional Schr\"odinger equation \eqref{eq:FSE}. The dual-phase function $\tilde{\Phi}$ is defined as

This lemma involves utilizing integration by part with relation $x - Q=\ri Z^{-1} \partial_z \Phi$ to obtain the equivalent asymptotic orders with respect to $\veps$ for each term. Given an FGA formulation such as \eqref{eq:fga_standard}, one can derive the equations of FGA quantities to formulate the FGA solution that is accurate up to $O(\veps^k)$ by aligning the asymptotic expansions. In conventional FGA analysis, the typical approach is to employ the Taylor expansion of terms in the evolution operator around $x=Q$ and apply it to the FGA ansatz to match the form in Lemma \ref{lemma:x-Q}. However, for the FSE, the kinetic term neither exhibits smoothness nor can be explicitly calculated. Specifically, we need to handle the terms of the form:
\begin{equation*}
    \kin(-\ri\veps \partial_x) \psi^{\veps}_{\FGA} %(t,x)
    = \frac{1}{(2\pi\veps)^{5d/2}}\!\int_{\mathbb{R}^{3d}} \e^{\frac{\ri}{\veps} \Phi}\,\amp\,\varphi_0^{\veps} \dby \dbq \dbp \int_{\mathbb{R}^d} \kin(\xi + P) \e^{-\frac{1}{2\veps}(\xi - \ri(x-Q))^2} \dxi.
\end{equation*}
It is tedious and complicated to handle the singularity of $\kin(\xi)$ and extract the asymptotic expansion within the position space in terms of $x-Q$. To address this issue, noting that the kinetic term in the \FSE~is defined by a Fourier integral in the momentum space, it would be beneficial to handle its expansion and asymptotic orders in the momentum space. Therefore, we introduce the concepts of the dual phase function and its corresponding action function and quantify the equivalent asymptotic orders in terms of the momentum displacement $\xi-P$ instead of $x-Q$. This extension allows us to effectively manage the kinetic term in the \FSE.

%This lemma provides equivalent asymptotic orders in terms of $x-Q$ using Taylor expansion of the differential equation in the position space when deriving the conventional FGA for, \eg, the standard Schr\"odinger equation. 
%However, for the \FSE, the expansion of the kinetic term introduces higher-order derivatives of its symbol to achieve a certain order of $x-Q$. These higher-order derivative terms are difficult to estimate due to the singularity at $P=0$.  
%Noting that the kinetic term in the \FSE~is defined by a Fourier integral in the momentum space, it would be beneficial to handle its expansion and asymptotic orders in the momentum space. Therefore, in the following paragraphs, we introduce the concepts of the dual phase function and the dual action, and quantify the equivalent asymptotic orders in terms of the momentum displacement $\xi-P$. 

The dual-phase function $\tilde{\Phi}$ is defined as
\begin{multline}\label{eq:tildePhi}
    \tilde\Phi(t,\xi,\ze,q,p) := \tilde{S}(t,q,p) - (\xi-P(t,q,p))\cdot Q(t,q,p)
    \\  + \f\ri2|\xi-P(t,q,p)|^2 + (\ze-p)\cdot q  + \f\ri2|\ze-p|^2 ,
\end{multline}
where the dual action $\tilde{S}$ is defined as the unique solution of
\begin{equation}\label{eq:tildeS}
    \ddt{\tilde{S}} = Q\cdot\p_QH(Q,P) - H(Q,P), \quad \tilde{S}(0,q,p) = 0 .
\end{equation}
We remark that the difference between $S$ and $\tilde{S}$ is
\begin{equation}\label{eq:diff-S}
    S-\tilde{S} = Q\cdot P - q\cdot p.
\end{equation}
Additionally, it is straightforward to verify that $\tilde{S}$ satisfies
\begin{equation}
    \p_q\tilde{S} + \p_qP\cdot Q=0 \quad\text{ and }\quad \p_p\tilde{S} + \p_pP\cdot Q - q=0,
\end{equation}
and we have the following relation,
\begin{equation}
    \p_z\tilde\Phi=\p_q\tilde\Phi-\ri\p_p\tilde\Phi = - (\p_zQ+\ri\p_zP)\cdot(\xi-P) = -Z(\xi-P).
\end{equation}
These relations are analogous to their counterparts in the original phase function and action, leading to a parallel asymptotic order analysis presented in the following Lemma \ref{lemma:xi-P} for the Fourier integral operator with dual phase function defined as
\begin{equation*}
    [\tFIO(u)\psi](\xi)=\frac{1}{(2\pi\veps)^{3d/2}} \int_{\mathbb{R}^{3d}} \e^{\ri\tilde{\Phi}(t,\xi,\zeta,q,p)/\veps} \,u(q,p)\,\psi(\zeta) \dze \dbq \dbp,
\end{equation*}
where $u\in \mathcal{S}(\mathbb{R}^{2d}; \mathbb{C})$ and $\psi \in\mathcal{S}(\mathbb{R}^d; \mathbb{C})$ are Schwartz-class functions.
\begin{lemma} \label{lemma:xi-P}
    For any $v(\ze, q, p)\in\bbR^{d}$
    %, $M(\ze, q, p)\in\bbR^{d\times d}$, and $N(\ze, q, p)\in\bbR^{d\times d\times d}$ 
    in Schwartz class, we have
    \begin{align}
        \label{eq:v(xi-P)}
        (\xi-P)_j\,v_j \stackrel{\tilde\Phi}{\sim} &
        - \ri\veps \partial_{z_{k}}\left(v_{j} Z_{j k}^{-1}\right).
        %\\
        %\label{eq:(xi-P)^2}
        %(\xi-P)^2_{jk}\,M_{jk} \stackrel{\tilde\Phi}{\sim} & ~
        %\ri\veps \partial_{z_l} P_{j} M_{j k} Z_{k l}^{-1} - \veps^{2} \partial_{z_{m}}\left(\partial_{z_{l}}\left(M_{j k} Z_{k l}^{-1}\right) Z_{j m}^{-1}\right).
        % \\
        %       \label{eq:(xi-P)^3}
        % (\xi-P)^3_{jkl}\,N_{jkl} \stackrel{\tilde\Phi}{\sim}&~ 
        % {}\veps^2 \partial_{z_m}\left( \partial_{z_n}P_j N_{jkl}Z^{-1}_{lm}Z^{-1}_{kn} \right)  - \veps^{2} \partial_{z_{m}}\left(\partial_{z_{n}}\left(N_{j k l} \,(\xi-P)_l \,Z_{k n}^{-1}\right) Z_{j m}^{-1}\right)
        % \\
        % \stackrel{\tilde\Phi}{\sim}&~ 
        % \veps^2 \partial_{z_m}\left( \partial_{z_n}P_j N_{jkl}Z^{-1}_{lm}Z^{-1}_{kn} \right) +\veps^2 \partial_{z_m}\left( \partial_{z_n}P_l N_{jkl}Z^{-1}_{jm}Z^{-1}_{kn} \right) \nonumber\\
        % &{} +\veps^2 \partial_{z_m}P_l \partial_{z_n}\left(N_{jkl}Z^{-1}_{kn}\right)Z^{-1}_{jm} 
        % {} +\ri\veps^{3} \p_{z_r} \left( \partial_{z_{m}}\left(\partial_{z_{n}}\left(N_{j k l} Z_{k n}^{-1}\right) Z_{j m}^{-1}\right)  Z^{-1}_{lr} \right).\nonumber
    \end{align}
\end{lemma}
%\begin{proof}
%	The proof is similar to that for Lemma \ref{lemma:x-Q}.
%\end{proof}
%Now we introduce the Fourier-Bros-Iagolnitzer (FBI) transform, which can be regarded as a component of the Fourier integral operator and be useful in our convergence analysis.
For $(q,p)\in \mathbb{R}^{2d}$, define the Fourier-Bros-Iagolnitzer (FBI) transformations $\FBI_{\pm}$
%($\FBI_+$ used for kinetic part and $\FBI_-$ used for potential part) 
on $\mathcal{S}(\mathbb{R}^d)$ as follows:
\begin{align}\label{eq:FBI}
    (\FBI_{-} f)(q,p) & = \frac{1}{2^{d/2}} \frac{1}{(\pi\varepsilon)^{3d/4}} \int_{\mathbb{R}^d} e^{-\frac{\mathrm{i}}{\varepsilon}p\cdot(x-q) -\frac{1}{2\varepsilon}|x-q|^2} f(x) \dbx,      \\
    (\FBI_{+} f)(q,p) & = \frac{1}{2^{d/2}} \frac{1}{(\pi\varepsilon)^{3d/4}} \int_{\mathbb{R}^d} e^{\frac{\mathrm{i}}{\varepsilon}q\cdot(\xi-p) -\frac{1}{2\varepsilon}|\xi-p|^2} f(\xi) \dxi.
\end{align}
One can easily verify that given function $u(x)$ and its Fourier transform $\hat{u}(\xi)$,
\begin{equation} \label{eq:equvi_FBI}
    (\FBI_{+} \hat{u}(\xi))(q, p) =  e^{-\frac{\ri}{\veps}q\cdot p} (\FBI_{-} u(x))(q, p).
\end{equation}
The pseudo inverse FBI transformations $(\FBI_{\pm})^{*}$ on $\mathcal{S}(\mathbb{R}^{2d})$ are given by
\begin{align}\label{eq:FBI_inv}
    ((\FBI_{-})^{*} g)(x)   & = \frac{1}{2^{d/2}} \frac{1}{(\pi\varepsilon)^{3d/4}} \int_{\mathbb{R}^{2d}} e^{\frac{\mathrm{i}}{\varepsilon}p\cdot(x-q) -\frac{1}{2\varepsilon}|x-q|^2} g(q,p) \dbq \dbp,      \\
    ((\FBI_{+})^{*} g)(\xi) & = \frac{1}{2^{d/2}} \frac{1}{(\pi\varepsilon)^{3d/4}} \int_{\mathbb{R}^{2d}} e^{-\frac{\mathrm{i}}{\varepsilon}q\cdot(\xi-p) -\frac{1}{2\varepsilon}|\xi-p|^2} g(q,p) \dbq \dbp.
\end{align}
We have the following property for the FBI transformation, whose proof can be found in \cite{martinez2002introduction}.
\begin{proposition} \label{prop:FBI_norm}
    For any $f\in \mathcal{S}(\mathbb{R}^d)$, $\| \FBI_{\pm} f \|_{L^2(\mathbb{R}^{2d})} = \| f \|_{L^2(\mathbb{R}^{d})}$.
    %    \begin{equation} \label{eq:FBI_norm}
    %        \| \FBI_{\pm} f \|_{L^2(\mathbb{R}^{2d})} = \| f \|_{L^2(\mathbb{R}^{d})}.
    %    \end{equation}    
    Hence the domain of $\FBI_{\pm}$ and $(\FBI_{\pm})^{*}$ can be extended to $L^2(\bbR^d)$ and $L^2(\bbR^{2d})$, respectively. Moreover, $(\FBI_{\pm})^{*}\FBI_{\pm} = \mathrm{Id}_{L^2(\bbR^{d})}$ but $\FBI_{\pm}(\FBI_{\pm})^{*} \neq \mathrm{Id}_{L^2(\bbR^{2d})}$.
\end{proposition}

\subsection{Formulation of the frozen Gaussian approximation}\label{sec:FGA-derivation}

The (1st-order) FGA is defined as
%\vspace{-5pt}
\begin{align} \label{eq:fga_standard}
    \psi^{\veps}_{\FGA}(t,x)
    = \left[ \FIO(\amp)\varphi_0^{\veps}\right](x)
    = \frac{1}{(2\pi\veps)^{3d/2}} \int_{\mathbb{R}^{3d}} \e^{\frac{\ri}{\veps} \Phi(t,x,y,q,p)}\,\amp(t,q,p)\,\varphi_0^{\veps}(y) \dby \dbq \dbp,
\end{align}
where $\varphi_0^{\veps}(x)$ is the initial condition, $\Phi$ is defined in \eqref{eq:Phi} with the {\it position center} $Q$ and the {\it momentum center} $P$ satisfying the Hamiltonian flow \eqref{eq:hamiltonian-flow}, the {\it action} $S$ satisfying \eqref{eq:action}, and the {\it amplitude} $a$ satisfying
%\vspace{-10pt}
\begin{align}\label{eq:amplitude}
    \ddt\amp = \frac{1}{2}  \text{tr}\left( Z^{-1} \ddt Z  \right) \amp, \qquad a(0,q,p) = 2^{d/2}.
\end{align}
\vspace{-15pt}
% \begin{remark}
%     Note that for invertible matrix $A(t)\in \bbC^{d\times d}$, it satisfies
%     \begin{equation}
%         \ddt{} \left( \ln\det A(t) \right) = \mathrm{tr}\left(A(t)^{-1} \, \ddt{A(t)} \right),
%     \end{equation}
%     and $\det(Z(0, q, p)) = 2^d$. By the uniqueness of the equation of amplitude, we have
%     \begin{equation}
%         a(t,q,p)=\left(\det Z\right)^{1/2},
%     \end{equation}
%     as an analytical solution with the branch of square root determined continuously in time by the initial value.
% \end{remark}
\begin{remark}
    The well-posedness of FGA system \eqref{eq:hamiltonian-flow}, \eqref{eq:action}, and \eqref{eq:amplitude} requires the Hamiltonian $H$ to be at least $C^2$, which is not valid for the \FSE. Here, we perform formal derivation by assuming $H$ is smooth enough, and leave the rigorous modification to the next section.
\end{remark}

To see how the FGA solution fits the Schr\"odinger equation, we substitute \eqref{eq:fga_standard} into the Schr{\"o}dinger equation \eqref{eq:Schrodinger} and compute each term.

For the time-derivative term,
\begin{align} \label{eq:psi_t}
    \ri\veps \partial_t \psi^{\veps}_{\FGA}
     & =\FIO \Big( \ri\veps\partial_t \amp - \amp\left((\partial_t S - P\cdot \partial_t Q) +  (x-Q)\cdot \partial_t(P-\ri Q)\right) \Big) \varphi_0^\veps
    \nonumber                                                                                                                                              \\&= \FIO  \left( \ri\veps\partial_t \amp - \amp(\partial_t S - P \cdot\partial_t Q) + \veps\partial_{z_k}(\partial_t(P - \ri Q)_j \amp Z^{-1}_{jk} ) \right) \varphi_0^\veps.
\end{align}

For potential function $V(x)$, using Taylor expansion of $V(x)$ around $x=Q$,
\begin{multline}
    V(x) = V(Q)  + \p_{Q_j} V(Q)\,(x-Q)_j  + \f12\,\p_{Q_{jk}}^2V(Q)\,(x-Q)^2_{jk} \\
    {} +  \f12(x-Q)^3_{jkl}\,\int_0^1(1-\tau)^2\p^3_{Q_{jkl}}V(Q+\tau(x-Q)) \rd\tau.
\end{multline}
Then using Definition \ref{def:sim} and Lemma \ref{lemma:x-Q},
\begin{multline}\label{eq:vpsi} % V(x) is independent of the integral variables q, p, and y, thus can move inside the FIO.
    V(x)\psi^{\veps}_{\FGA} = \FIO
    \Big(~
    V(Q)\amp -\veps\partial_{z_k}\left(\partial_{Q_j} V(Q) \amp Z^{-1}_{jk}\right) \\
    + \frac{\veps}{2}\,\partial_{z_l}Q_j\,\partial^2_{Q_{jk}} V(Q)\, \amp\, Z^{-1}_{kl}
    %+ O(\veps^2)
    + \veps^2{R_V(x,q,p)}
    ~\Big) \psieo ,
\end{multline}
where $R_{V}(x,q,p)$ denotes the remainder term to be discussed latter.

For the kinetic operator, it is useful to consider the FGA solution in the momentum space. Taking Fourier transform of $\psie_\FGA$, we get
%\begin{align*}
%    \hpsie_\FGA
%    %(t,\xi)        
%    & = \frac{1}{(2\pi\veps)^{3d/2}} \int_{\mathbb{R}^{4d}} \e^{\frac{\ri}{\veps} \Phi(t,x,y,q,p) - \f\ri\veps \xi\cdot x}\,\amp(t,q,p)\,\varphi_0^{\veps}(y) \dbx \dby \dbq \dbp,       
%    \\& = \frac{1}{(2\pi\veps)^{3d/2}} \int_{\mathbb{R}^{4d}} \e^{\frac{\ri}{\veps} \left( S -p\cdot(y-q) + \f\ri2(y-q)^2 +  P\cdot(x-Q) +\f\ri2(x-Q)^2 -  \xi\cdot x \right)}\,\amp(t,q,p)\,\varphi_0^{\veps}(y) \dbx \dby \dbq \dbp       
%    \\& = \frac{1}{(2\pi\veps)^{d}} \int_{\mathbb{R}^{3d}} \e^{\frac{\ri}{\veps} \left( S -p\cdot(y-q) + \f\ri2(y-q)^2  - \xi\cdot Q + \f\ri2(\xi-P)^2  \right)}\,\amp(t,q,p)\,\varphi_0^{\veps}(y) \dby \dbq \dbp       
%    %\\& = \frac{1}{(2\pi\veps)^{2d}} \int_{\mathbb{R}^{4d}} \e^{\frac{\ri}{\veps} \left( S + \ze\cdot y - p\cdot(y-q) + \f\ri2(y-q)^2  - \xi\cdot Q + \f\ri2(\xi-P)^2  \right)}\,\amp(t,q,p)\,\hat\varphi_0^\veps(\ze) \dze \dby \dbq \dbp       
%    \\& = \frac{1}{(2\pi\veps)^{3d/2}} \int_{\mathbb{R}^{3d}} \e^{\frac{\ri}{\veps} \left( S - \xi\cdot Q + \ze\cdot q  + \f\ri2(\ze-p)^2 + \f\ri2(\xi-P)^2  \right)}\,\amp(t,q,p)\,\hat\varphi_0^\veps(\ze) \dze \dbq \dbp       
%    \\& = \frac{1}{(2\pi\veps)^{3d/2}} \int_{\mathbb{R}^{3d}} \e^{\frac{\ri}{\veps} \left( S - (\xi-P)\cdot Q + (\ze-p)\cdot q  + \f\ri2(\ze-p)^2 + \f\ri2(\xi-P)^2  \right)} \e^{\f\ri\veps(q\cdot p-Q\cdot P)} \,\amp(t,q,p)\,\hat\varphi_0^\veps(\ze) \dze \dbq \dbp,       
%\end{align*}
%thus
\begin{equation}\label{eq:fga_fourier}
    \hpsie_\FGA(t,\xi) = \left[\tFIO(\amp)\hat\varphi_0^\veps\right](\xi) = \frac{1}{(2\pi\veps)^{3d/2}} \int_{\mathbb{R}^{3d}} \e^{\frac{\ri}{\veps} \tilde\Phi(t,\xi,\ze,q,p)}  \,\amp(t,q,p)\,\hat\varphi_0^\veps(\ze) \dze \dbq \dbp,
\end{equation}
where we have used the equality \eqref{eq:diff-S} and $\tilde\Phi$ is defined in \eqref{eq:tildePhi}.
By \eqref{eq:kin-operator},
$$\kin(-\ri\veps\p_x)\psie_\FGA = \IFT\left[\kin(\xi)\hpsie_{\FGA}\right].$$
Thus, we expand
\begin{multline}\label{eq:kin_taylor}
    %    \kin(\xi) = \kin(P) + \p_{P_j}\kin(P)\,(\xi-P)_j + \f12\p_{P_jP_k}^2\kin(P)\,(\xi-P)_j(\xi-P)_k
    %    \\{} + 
    %    \f16(\xi-P)_j(\xi-P)_k(\xi-P)_l\,\int_0^1(1-\tau)^2\p^3_{P_jP_kP_l}\kin(P+\tau(\xi-P)) \, \rd\tau .
    \kin(\xi) = \kin(P) + \p_{P_j}\kin(P)\,(\xi-P)_j + \f12\p_{P_{jk}}^2\kin(P)\,(\xi-P)^2_{jk}
    \\{} +
    \f12(\xi-P)^3_{jkl}\,\int_0^1(1-\tau)^2\p^3_{P_{jkl}}\kin(P+\tau(\xi-P)) \, \rd\tau .
\end{multline}
%# Here we can see the advantage to deal with the dual phase function: the operator acting on the wave function in position space is transform to the symbol multiply the wave function in momentum space. Then the expansion and the fourier integral are separated.
%# If using position phase funtion, we have to deal with expansion and fourier integral together.
Therefore by Lemma \ref{lemma:xi-P} we can write
\begin{multline}\label{eq:kinpsi-fourier} % \kin(\xi) is independent of the integral variables q, p, and zeta, thus can move inside the FIO.
    \kin(\xi)\hpsie_{\FGA} = \tFIO
    \Big(~
    \kin(P)\amp -\ri\veps\partial_{z_k}\left(\partial_{P_j} \kin(P) \amp Z^{-1}_{jk}\right)
    +\frac{\ri\veps}{2}\,\partial_{z_l}P_j\,\partial^2_{P_{jk}} \kin(P)\, \amp\, Z^{-1}_{kl}
    \\+ \veps^2{R_\kin(\xi,q,p)}
    ~\Big) \hat\varphi_0^\veps ,
\end{multline}
where $R_{\kin}(\xi,q,p)$ denotes the remainder term.
% Back to the position space,
% \begin{multline}\label{eq:kinpsi}
% 	\kin(-\ri\veps\p_x)\psie_{\FGA} = \FIO
% 	\Big(~
% 	\kin(P)\amp -\ri\veps\partial_{z_k}\left(\partial_{P_j} \kin(P) \amp Z^{-1}_{jk}\right)
% 	+\frac{\ri\veps}{2}\,\partial_{z_l}P_j\,\partial^2_{P_{jk}} \kin(P)\, \amp\, Z^{-1}_{kl}
% 	~\Big) \varphi_0^\veps
% 	\\ {} + \veps^2 \IFT\left[ \tFIO \left(R_\kin(\xi,q,p)\right)\hat\varphi_0^\veps \right](x).
% \end{multline}
Remark that the $R_V$ and $R_\kin$ are of similar form, except that $R_V$ is obtained from expanding the potential $V$ in position space, while $R_\kin$ is obtained from expanding the kinetic energy $\kin$ in the momentum space.
Take $R_\kin$ for example. In deriving \eqref{eq:kinpsi-fourier} from $\eqref{eq:kin_taylor}$ by Lemma \ref{lemma:xi-P}, the 2nd-order derivative term gives
%\begin{subequations}\label{eq:d2T-sim}
\begin{multline}\label{eq:d2T-sim}
    %\label{eq:d2T-sim1}
    \amp\, \p_{P_{jk}}^2\kin(P)\,(\xi-P)^2_{jk}
    %\stackrel{\tilde\Phi}{\sim} & ~\ri\veps \,\amp\, \partial_{z_l} P_{j}\, \p_{P_{jk}}^2\kin(P)\,Z_{k l}^{-1} - \ri\veps (\xi-P)_{j} \left(\partial_{z_{l}}\left(\amp\, \p_{P_{jk}}^2\kin(P) \,Z_{k l}^{-1}\right) \right)
    %\\
    %\label{eq:d2T-sim2}
    \stackrel{\tilde\Phi}{\sim}  ~\ri\veps \,\amp\, \partial_{z_l} P_{j}\, \p_{P_{jk}}^2\kin(P)\, Z_{k l}^{-1} \\ - \veps^{2}\,\partial_{z_{m}}\left(\partial_{z_{l}}\left(\amp\, \p_{P_{jk}}^2\kin(P) \, Z_{k l}^{-1}\right) Z_{j m}^{-1}\right),
\end{multline}
%\end{subequations}
and by denoting $\dkin3_{jkl}=3\int_0^1(1-\tau)^2\p^3_{P_{jkl}}\kin(P+\tau(\xi-P)) \, \rd\tau$, the 3rd-order derivative term gives
%\begin{subequations}\label{eq:d3T-sim}
\begin{align}
    %\label{eq:d3T-sim1}
    \amp\, (\xi-P)^3_{jkl}\,\dkin3_{jkl}
    %\stackrel{\tilde\Phi}{\sim} & ~	{}-\ri\veps\partial_{z_m}\left((\xi-P)^2_{kl}\,\amp\,\dkin3_{jkl}Z_{jm}^{-1}\right)
    %\\
    %\stackrel{\tilde\Phi}{\sim} & ~	{}-\ri\veps(\xi-P)^2_{kl}\,\partial_{z_m}\left(\,\amp\,\dkin3_{jkl}Z_{jm}^{-1}\right)
    %+2\ri\veps(\xi-P)_{k}\,\amp\,\partial_{z_m}P_l\,\dkin3_{jkl}Z_{jm}^{-1}
    %\\
    %\label{eq:d3T-sim2}
    %\stackrel{\tilde\Phi}{\sim} & ~ {}-\ri\veps(\xi-P)^2_{kl}\,\partial_{z_m}\left(\amp\,\dkin3_{jkl}Z_{jm}^{-1}\right)
    %+2\veps^2 \partial_{z_n}\left(\amp\,\partial_{z_m}P_l\,\dkin3_{jkl}Z_{jm}^{-1}Z_{kn}^{-1}\right)
    %\\
    %\label{eq:d3T-sim3}
    \stackrel{\tilde\Phi}{\sim} & ~ {}\veps^2\,\p_{z_n}P_l\,\partial_{z_m}\left(\amp\,\dkin3_{jkl}Z_{jm}^{-1}\right)\,Z_{kn}^{-1}
    +2\veps^2 \partial_{z_n}\left(\amp\,\partial_{z_m}P_l\,\dkin3_{jkl}Z_{jm}^{-1}Z_{kn}^{-1}\right)
    \nonumber                                                                                                                     \\
                                & ~~~~~~~~~~~~~~~~~~~~~~~~~~
    {} + \ri\veps^{3} \p_{z_r} \left( \partial_{z_{n}}\left(\partial_{z_{m}}\left(\amp\,\dkin3_{j k l} Z_{j m}^{-1}\right) Z_{k n}^{-1}\right)  Z^{-1}_{lr} \right).
    \label{eq:d3T-sim}
\end{align}
%\end{subequations}
%Note to obtain the (a) parts in \eqref{eq:d2T-sim} and \eqref{eq:d3T-sim}
Then $\veps^2R_\kin$ in \eqref{eq:RT} consists all the $O(\veps^2)$ and $O(\veps^3)$ terms in \eqref{eq:d2T-sim} and \eqref{eq:d3T-sim}, that is
\begin{align}
    R_\kin(\xi, q, p) = {}-\f12 \partial_{z_{m}}\left(\partial_{z_{l}}\left(\amp\,\p^2_{P_{jk}}\kin Z_{k l}^{-1}\right) Z_{j m}^{-1}\right)
    -\f16\,\p_{z_n}P_l\,\partial_{z_m}\left(\amp\,\dkin3_{jkl}Z_{jm}^{-1}\right)\,Z_{kn}^{-1}
    ~~~\nonumber \\
    {}+\f13 \partial_{z_n}\left(\amp\,\partial_{z_m}P_l\,\dkin3_{jkl}\,Z_{jm}^{-1}\,Z_{kn}^{-1}\right)
    {}+\f\ri6\veps\p_{z_r} \left( \partial_{z_{n}}\left(\partial_{z_{m}}\left(\amp\,\dkin3_{j k l} Z_{j m}^{-1}\right) Z_{k n}^{-1}\right)  Z^{-1}_{lr} \right).
    \label{eq:RT}
\end{align}

Geting together\eqref{eq:psi_t}, \eqref{eq:vpsi}, and  \eqref{eq:kinpsi-fourier}, and using the fact that the Hamiltonian flow satisfies $\partial_t(P - \ri Q)_j  + \ri\,\p_{P_j}\kin(P) + \p_{Q_j}V(Q) = 0$, we obtain
\begin{multline}\label{eq:SchrFGA}
    \Big(\ri\veps\p_t-\kin(-\ri\veps\p_x)-V(x)\Big)\psie_\FGA
    \\ = \FIO
    \Big[
    -\amp(\partial_t S - P \cdot\partial_t Q + \kin(P) + V(Q) )
    % + \veps\partial_{z_k}(\partial_t(P - \ri Q)_j \amp Z^{-1}_{jk} )
    %\\
    %{} + \veps\p_{z_k}\left( \left( \partial_t(P - \ri Q)_j + \ri\p_{P_j}\kin + \partial_{Q_j} V\right)Z_{jk}^{-1}\,\amp \right) 
    %\\
    %{} + \veps\partial_{z_k}\left(\partial_{Q_j} V(Q) \,Z_{jk}^{-1}\,\amp\right) 
    {} + \ri\veps\partial_t \amp
    - \frac{\ri\veps}{2}\partial_{z_l} P_{j} \, \p_{P_{jk}}^2\kin \, Z_{k l}^{-1}  \, \amp
    \\
    ~~{} -  \frac{\veps}{2}\,\partial_{z_l}Q_j\,\partial^2_{Q_{jk}} V\, Z^{-1}_{kl}\, \amp
    - \veps^2 R_V(x,q,p)
    \Big]\varphi_0^\veps
    %{} - \veps^2  \FIO \left(R_V(x,q,p)\,\amp\right)\varphi_0^\veps  
    - \veps^2 \IFT\left[ \tFIO \left(R_\kin(\xi,q,p)\right)\hat\varphi_0^\veps \right].
\end{multline}
We define the following operators:
\begin{align}
    \mathcal{L}_0 \amp & :=-(\partial_t S - P \cdot\partial_t Q + \kin(P) + V(Q) ) \amp,
    \\\mathcal{L}_1 \amp&:=\ri\p_t \amp - \frac{\ri}{2}\partial_{z_l} P_{j} \, \p_{P_{jk}}^2\kin \, Z_{k l}^{-1}  \, \amp - \frac{1}{2} \p_{z_l}Q_j \, \p_{Q_{jk}}^2 V \, Z_{k l}^{-1} \, \amp,
    \\ \mathcal{R}a &:= -\FIO \left(R_V(x,q,p)\right)\varphi_0^\veps  -  \IFT\left[ \tFIO \left(R_\kin(\xi,q,p)\right)\hat\varphi_0^\veps \right]. \label{eq:reminder}
\end{align}
Thus, \eqref{eq:SchrFGA} can be shortened to
\begin{equation}\label{eq:SchrFGA3}
    \Big(\ri\veps\p_t-\kin(-\ri\veps\p_x)-V(x)\Big)\psie_\FGA
    = \FIO
    \Big[~\mathcal{L}_0 \amp + \veps\,\mathcal{L}_1 \amp ~\Big]\varphi_0^\veps + \veps^2\mathcal{R}\,\amp.
\end{equation}
The $\mathcal{L}_0$ term is zero by \eqref{eq:hamiltonian-flow} and \eqref{eq:action}. For $\mathcal{L}_1$, noting the fact that
\begin{align}
    \ddt{}\left({\p_{z_l}Q_k}\right) = \p_{z_l}\left(\ddt{Q_k}\right) = \p_{z_l}\left(\p_{P_k}\kin(P)\right) = \partial_{z_l} P_{j} \, \p_{P_{jk}}^2\kin(P),
    \\
    \ddt{}\left({\p_{z_l}P_k}\right) = \p_{z_l}\left(\ddt{P_k}\right) = \p_{z_l}\left(-\p_{Q_k}V(Q)\right) = -\partial_{z_l} Q_{j} \, \p_{Q_{jk}}^2 V(Q),
\end{align}
we obtain
\begin{equation}
    \mathcal{L}_1 a
    %= \ri\p_t \amp -  \f\ri2 \, \left( \ddt{}\left({\p_{z_l}Q_k}\right) + \ri \, \ddt{}\left({\p_{z_l}P_k}\right) \right)\, Z^{-1}_{kl} \,\amp 
    = \ri\p_t\amp - \f\ri2 \, \text{tr}\left( \ddt{Z}Z^{-1} \right)\amp = 0,
\end{equation}
where the last equation is implied by the evolutionary equation \eqref{eq:amplitude} for the amplitude. Thus, only the $O(\veps^2)$ terms remain in \eqref{eq:SchrFGA3}, and we have
\begin{equation}\label{eq:SchrFGA-4}
    \Big(\ri\veps\p_t-\kin(-\ri\veps\p_x)-V(x)\Big)\psie_\FGA  = \veps^2\,\mathcal{R}\,\amp.
\end{equation}

\section{The FGA for the modified \FSE}
We have shown that the governing equation \eqref{eq:SchrFGA-4} for the FGA solution is formally an $O(\veps^2)$ perturbation to the Schr\"odinger equation \eqref{eq:Schrodinger} if $\mathcal{R}\,\amp$ is bounded.
% Then by the following lemma, one can prove that the FGA solution converges to the solution of Schr\"odinger equation in the first order of $\veps$. 
% \begin{lemma} \cite[Lemma 2.8]{hagedorn1998raising} \label{lemma:energy_estimate}
%     Suppose $H(\veps)$ is a family of self-adjoint operators for $\veps>0$. Suppose $\psi(t,\veps)$ belongs to the domain of $H(\veps)$, is continuously differentiable in $t$ and approximately solves the Schr{\"o}dinger equation in the sense that
%     \begin{equation}
%         \ri\veps \frac{\partial \psi}{\partial t}(t,\veps) = H(\veps)\psi(t,\veps) + \ze(t,\veps),
%     \end{equation}
%     where $\ze(t,\veps)$ satisfies
%     \begin{equation}
%         ||\ze(t,\veps)||_{L^2(\mathbb{R}^{d})} \leq \mu(t,\veps).
%     \end{equation}
%     Then,
%     \begin{equation}
%         \left\| \e^{-\ri t H(\veps)/\veps} \psi(0,\veps) - \psi(t,\veps) \right\|_{L^2(\mathbb{R}^d)} \leq \veps^{-1} \int_0^t \mu(s,\veps) ds.
%     \end{equation}
% \end{lemma}
The remainder $\mathcal{R}\,\amp$ arises from the Taylor expansion of the Hamiltonian symbol. Therefore, it can be bounded in cases where the Hamiltonian is sufficiently smooth, \eg, if $H(q,p)\in C^{\infty}(\bbR^{2d};\bbR)$. However, in cases where the Hamiltonian may have singularities, such as in the \FSE,~$H(q,p)=\f{|p|^\alpha}\alpha+V(q)$ with $1\leq\alpha<2$, the high-order derivatives of $H$ with respect to $p$ are singular at $p=0$.

Notice that $\mathcal{R}\,\amp$ in \eqref{eq:reminder} consists of two parts: $R_V$ and $R_\kin$. Since the Fourier transform is unitary in $L^2$ norm and we have assumed that the initial $\varphi_0^\veps$ is in Schwartz class, the analysis of the two parts of $\mathcal{R}\,\amp$ follow the same approach.
Let us assume that $V\in C^{\infty}$ and consider $\kin$ may produce singularities at $p=0$.
%The $R_\kin$ term takes the form of 
%\begin{multline}
%R_\kin(\xi, q, p) = \f12 %\partial_{z_{m}}\left(\partial_{z_{l}}\left(\amp\,\p_{P_jP_k}\kin %Z_{k l}^{-1}\right) Z_{j m}^{-1}\right) 
%-\f16 \partial_{z_m}\left( \amp\,\partial_{z_n}P_j \, %\kin_{jkl}\,Z^{-1}_{lm}Z^{-1}_{kn} \right) \\
%~~~~{}- \f16 \p_{z_m} \left(\amp\,\p_{z_n}P_l\, \kin_{jkl}\, %Z^{-1}_{jm}\, Z^{-1}_{kn}\right) 
%- \f16 \p_{z_m}P_l \, \p_{z_n}\left(\amp\, \kin_{jkl}\, %Z^{-1}_{kn}\right) Z^{-1}_{jm} \\
%- \f\ri6 \veps\, \p_{z_r} \left( \partial_{z_{m}} %\left(\partial_{z_{n}}\left(\amp\,\kin_{j k l} Z_{k n}^{-1}\right) %Z_{j m}^{-1}\right)  Z^{-1}_{lr} \right),
%\end{multline}
%where $\kin_{jkl}=\int_0^1(1-\tau)^2\p^3_{P_jP_kP_l}\kin(P+\tau(\xi-P)) \, \rd\tau$. 
Recall that the expression of $R_\kin$ \eqref{eq:RT} involves at least the $4$th-order and at most the $6$th-order derivatives of the kinetic symbol $T$. The singularities in high-order derivatives make the boundedness of $R_\kin$ challenging. Additionally, we need to clarify how to evolve the ODEs \eqref{eq:hamiltonian-flow} and \eqref{eq:amplitude} when the trajectory touches the singularity at $|P|=0$.

To overcome these difficulties, we introduce a singularity-removed kinetic symbol by replacing $|p|$ with $\sqrt{|p|^2+\delta^2}$ and define
$\kindt(p) := \kin(\sqrt{|p|^2+\delta^2})$.
%\begin{equation}\label{def:kinetic-delta}
%    \kindt(p) = \kin(\sqrt{|p|^2+\delta^2}).
%\end{equation}
We consider $\psiedt$ as the solution of the modified Schr\"odinger equation %defined by the kinetic symbol $\kindt$, \ie,
\begin{equation}\label{eq:Schrodinger-delta}
    \ri\veps \partial_t \psiedt = \kindt(-\ri\veps\partial_x)  \psiedt + V(x) \psiedt,
\end{equation}
with the same initial condition $\psiedt(0,x)=\varphi_0^\veps(x)$. It can be verified that when $\delta > 0$ is sufficiently small, $\psiedt$ closely approximates the solution of the original Schr\"odinger equation, $\psie$. The proof of this assertion will be presented later.

While the modification $\sqrt{|p|^2 + \delta^2}$ addresses the evolution of ODEs in FGA when encountering singularities at $|P|=0$, there might still be singular points of the Hamiltonian equation \eqref{eq:hamiltonian-flow} that possess constant solution with $|P(t)|\equiv0$. To address this issue, we introduce a cutoff with parameter $\omega$ for the initial condition of the Hamiltonian system to remove these singular points and ensure that no such constant solutions arise throughout the evolution.

In the following, we present a series of lemmas and propositions related to the construction of the approximation chain,
%\begin{equation*}
$\psie \longrightarrow \psie_{\delta} \longrightarrow \psie_{\delta, \text{FGA}} \longrightarrow \psie_{\delta, \text{FGA}, \omega}$.
%\end{equation*}
These results will be combined to establish the convergence in the proof of our main theorem.

\subsection{Cutoff strategy}

Firstly we consider the cutoff to remove those singular points of the Hamiltonian system with constant solution $P(t)\equiv 0$. For $\omega > 0$, we define an open set
%$B_{\omega} \subseteq \bbR^{2d}$ 
including singular points of Hamiltonian equations \eqref{eq:hamiltonian-flow} as
$$
    B_{\omega} = \left\{ (q, p)\in\bbR^{2d} \,\Big|\, |p|^2 + |q-q_0|^2 < \omega^2, \, \forall q_0 \in \bbR^{d} \text{ such that } \partial_q V(q_0) = 0 \right\}.
$$
Then we define the closed bounded set $K_{\omega} \subseteq \bbR^{2d}$ excludes those singular points as
\begin{equation} \label{eq:K_omega}
    K_{\omega} = \left\{ (q,p)\in \bbR^{2d} \setminus B_{\omega} \,\Big|\, |q|^2 + |p|^2 \leq {1}/{\omega^2}  \right\}.
\end{equation}
For the Hamiltonian system over closed set $K_{\omega}$, we have the following Lemma.
\begin{proposition} \label{lemma:motion_lower_bound}
    Given $\omega > 0$, $\Tf>0$, if $Q(t,q,p)$, $P(t,q,p)$ satisfy Hamiltonian equations \eqref{eq:hamiltonian-flow}, then
    \begin{equation}
        |P(t,q,p)| + |\partial_Q V(Q(t,q,p))| \geq C_{\omega, \Tf} > 0,
    \end{equation}
    for $t\in[0, \Tf]$, $(q, p)\in K_{\omega}$, where $C_{\omega, \Tf}$ is a constant.
\end{proposition}

\begin{proof}
    Denote one of the singular points of \eqref{eq:hamiltonian-flow} as $(q_0, p_0)$, then $|p_0|=0$, $|\partial_q V(q_0)|=0$. Consider closed set $\Omega_t = \left\{ (Q(t,q,p), P(t,q,p)) \,|\, (q, p)\in K_{\omega} \right\}$. Noting that a trajectory starts from $(q_0,p_0)$ will be a constant solution and all trajectories will not intersect during propagation, there will always be a positive distance between closed set $\Omega_t$ and singular point $(q_0, p_0)$. Thus,
    \begin{equation}
        |P(t,q,p)| + |\partial_Q V(Q(t,q,p))| > 0, \quad (t,q,p) \in [0, \Tf] \times K_{\omega}.
    \end{equation}
    Since $|P(t,q,p)| + |\partial_Q V(Q(t,q,p))|$ is a continuous function over the compact set $[0, \Tf] \times K_{\omega}$, it will reach a minimum $C_{\omega, \Tf} > 0$ over $[0, \Tf] \times K_{\omega}$, which completes the proof.
\end{proof}

\begin{remark}
    Prop. \ref{lemma:motion_lower_bound} primarily describes the phenomena that for a conserved system, if initial states are away from the equilibrium positions (singular points of Hamiltonian equations) where momentum $p$ and force $\partial_q V(q)$ are zero at the same time, then the trajectory will not be stuck in a fixed point during propagation.
\end{remark}

Motivated by the WKB function that is typical in the study of high-frequency wave propagation, we make the following assumption regarding the initial wave-function:
\begin{assumption}\label{asym-high-freq}
    The initial wave-function $\psie(t=0)=\psieo\in L^2(\bbR^d)$ satisfies $\norm{\psieo}_{L^2}=1$. Furthermove, $\psieo$ is an asymptoticallly high-frequency function, as defined in Definition~\ref{def:ahf}. Specifically, we assume that there exist functions $S_0\in C^{\infty}(\bbR^d)$ and $\rho_0\in \mathcal{S}(\bbR^d)$ such that $\rho_0\geq 0$, $\norm{\rho_0}_{L^1}\leq2\norm{\psieo}_{L^2}$, and
    \begin{equation*}
        \begin{aligned}
            \abs{(\FBI_-\psieo)(q,p)}^2 \leq  \f{\rho_0(q)}{(2\pi\veps)^{d/2}} \, \e^{-\f{\abs{p-\p_qS_0(q)}^2}{2\veps}}.
        \end{aligned}
    \end{equation*}
\end{assumption}
\begin{definition}[Asymptotically High-Frequency Function]\label{def:ahf}
    Let $\{u^{\veps}\} \subseteq L^2(\bbR^d)$ be a family of functions such that $\left\| u^{\veps} \right\|_{L^2(\bbR^d)}$ is uniformly bounded. Given $\omega > 0$, we say that $\{u^{\veps}\}$ is asymptotically high-frequency with cutoff $\omega$ if
    \begin{equation}
        \int_{\bbR^{2d}\setminus K_{\omega}} \left| (\FBI_{-} u^{\veps})(q,p) \right|^2 \dbq \dbp = O(\veps^{\infty})
        %\end{equation}
        % \quad\text{ and }\quad
        % %\begin{equation}
        % \int_{\bbR^{2d}\setminus K_{\omega}} \left| (\FBI_{+} \hat{u}^{\veps})(q,p) \right|^2 \dbq \dbp = O(\veps^{\infty})
    \end{equation}
    as $\veps \rightarrow 0$. The notation $A^{\veps} = O(\veps^{\infty})$ means that for any $k\in\N$,
    $
        \lim_{\veps\rightarrow 0} \veps^{-k} |A^{\veps}|=0
    $ .
    %if one denotes $A^{\veps} = O(\veps^{\infty})$. 
\end{definition}

\begin{remark}
    Although in the definition above we use two constraints for $u^{\veps}$ and $\hat{u}^{\veps}$ respectively, they are actually equivalent by \eqref{eq:equvi_FBI}, and only one of them need to be checked when verifying whether a function is asymptotically high-frequency or not.
\end{remark}

\begin{remark}
    The closed bounded set $K_{\omega}$ defined in \eqref{eq:K_omega} over phase-space $\bbR^{2d}$ identifies where the initial wave function is primarily supported under the FBI transform. It also excludes the singular points of Hamiltonian flow \eqref{eq:hamiltonian-flow} by removing small open balls around these points. That is why we introduce the cutoff FGA.
\end{remark}

Let $\chi_{\omega}: \bbR^{2d} \rightarrow [0, 1]$ be a smooth cutoff function with $\omega > 0$ such that
$\chi_{\omega}=1$ on $K_{\omega}$ and $\chi_{\omega}=0$ on $\bbR^{2d} \setminus K_{\omega/2}$.
%\begin{equation}
%    \chi_{\omega}(q,p) = \begin{cases}
%        1, & (q, p) \in K_{\omega}, \\
%        0, & (q, p) \in \bbR^{2d} \setminus K_{\omega/2},
%    \end{cases}
%\end{equation}
Then, for any $k\in \mathbb{N}$, there exists constant $C_{k, \omega} > 0$ such that
\begin{equation}
    \sup_{(q,p)\in\bbR^{2d}} \sup_{|\alpha|=k} \left| \partial_q^{\alpha_q}\partial_p^{\alpha_p} \chi_{\omega}(q,p) \right| \leq C_{k, \omega}.
\end{equation}
Now we can define the frozen Gaussian approximation with cutoff $\omega$ as
\begin{equation}
    \psi_{\text{FGA}, \omega}^{\veps}(t,x)
    %= \left[ \FIO(\amp \chi_{\omega})\varphi_0^{\veps}\right](x) 
    = \frac{1}{(2\pi\veps)^{3d/2}} \int_{\mathbb{R}^{3d}} \e^{\frac{\ri}{\veps} \Phi(t,x,y,q,p)}\,\amp(t,q,p)\,\chi_{\omega}(q,p)\,\varphi_0^{\veps}(y) \dby \dbq \dbp.
\end{equation}
To simplify notations, define a filtered amplitude $\tilde{\amp}(t,q,p):=\amp(t,q,p)\chi_{\omega}(q,p)$ satisfying
%as follows
%\begin{equation}
%    \tilde{\amp}(t,q,p)=\amp(t,q,p)\chi_{\omega}(q,p), 
%\end{equation}
%then $\tilde{\amp}(t,q,p)$ satisfies equation
\begin{equation} \label{eq:filtered_amplitude}
    \ddt {\tilde{\amp}} = \frac{1}{2} \text{tr}\left( \ddt Z Z^{-1} \right) \tilde{\amp}, \qquad \tilde{\amp}(0,q,p) = 2^{d/2} \chi_{\omega}(q,p).
\end{equation}
From the definition of filtered amplitude, we know that the value of $\amp(t,q,p)$ outside $\text{supp}\, \chi_{\omega}(q,p) \subseteq K_{\omega/2}$ will not affect $\tilde{\amp}(t,q,p)$.
Since $\amp(t,q,p)$ and $\tilde{\amp}(t,q,p)$ satisfy the same equation with different initial data, the original FGA $\psi^{\veps}_{\text{FGA}}$ and cutoff FGA $\psi^{\veps}_{\text{FGA}, \omega}$ also satisfy the same differential equation \eqref{eq:SchrFGA-4} except that they have difference initial values. Specifically, when $t=0$,
\begin{align*}
    \psi^{\veps}_{\text{FGA}}(0, x)= (\FBI_{-})^*(\FBI_{-} \varphi_0^{\veps})=\varphi_0^{\veps}(x),  \quad\text{while }\quad
    \psi^{\veps}_{\text{FGA}, \omega}(0, x) = (\FBI_{-})^*(\chi_{\omega} \FBI_{-} \varphi_0^{\veps}) .%:=\tilde{\varphi}_0^{\veps}(x).
\end{align*}
%does not equal to $\varphi_0^{\veps}(x)$. 
Despite the slight difference in their initial values, these variations do not impact the convergence of the FGA when asymptotically high-frequency functions are considered as initial conditions. This cutoff strategy has been thoroughly discussed in the context of the convergence of conventional FGAs; see \eg \cite{LuYa:CPAM}. Therefore, in the following discussion, we will not distinguish between $\psi^{\veps}_{\text{FGA}}$ and its cutoff version $\psi^{\veps}_{\text{FGA}, \omega}$, opting instead to use $\psi^{\veps}_{\text{FGA}}$ for notational simplicity.

%A detailed discussion will be presented later in the proof of our main theorem.

\subsection{The modified Hamiltonian flow}

Now we move forward to discuss the estimations of the Hamiltonian flow with the modified kinetic symbol $\kindt$. We make the following assumption for the system \eqref{eq:hamiltonian-flow}, which will be assumed for the rest of the paper without further indication.

\begin{assumption}\label{asp:global_lipschitz}
    The potential function is $C^{\infty}$ function. Specifically, there exists a constant $C>0$ such that $ \|\partial_Q V(Q) \|_{L^{\infty}(\bbR^d)} \leq C$.
    %    \begin{equation*}
    %        \|\partial_Q V(Q) \|_{L^{\infty}(\bbR^d)} \leq C.
    %    \end{equation*}
\end{assumption}
% \begin{assumption}\label{asp:global_lipschitz}
%     The potential function is $C^{\infty}$ function. Specifically, there exists constant $C_1$ and $C_2$ such that $0 < C_1 \leq \|\partial_Q V(Q) \|_{L^{\infty}(\bbR^d)} \leq C_2$.

% \end{assumption}

Despite the presence of singularities in the higher-order derivatives of $H(Q, P)$ at $P=0$, the solution of the Hamiltonian flow \eqref{eq:hamiltonian-flow} remains well-defined and unique. We conclude this property as the proposition stated below.

\begin{proposition} \label{prop:unique_solution}
    %    Suppose the Hamiltonian function 
    %    \begin{equation}
    %        H(Q, P)=\frac{1}{\alpha}|P|^{\alpha} + V(Q) := \kin(P) + V(Q)
    %    \end{equation}
    Consider the Hamiltonian $H$ in \eqref{eq:hamiltonian}
    with $1<\alpha\leq 2$ and smooth potential $V(Q)$, then the solution to the Hamiltonian flow \eqref{eq:hamiltonian-flow}
    is unique.
\end{proposition}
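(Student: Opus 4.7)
My plan is to establish existence via Peano's theorem and to obtain uniqueness by case analysis, isolating the non-Lipschitz singularity at $P = 0$ as the key obstacle. For existence, the right-hand side of \eqref{eq:hamiltonian-flow} is continuous on $\bbR^{2d}$: the map $\p_P T(P) = |P|^{\alpha-2}P$ extends continuously to $0$ at $P=0$ because $\alpha > 1$, while $\p_Q V$ is $C^\infty$ and bounded by Assumption \ref{asp:global_lipschitz}. Peano's theorem then yields a local solution. To extend it to $[0,T]$, I would combine the bound $|\dot P(t)| \le \|\p_Q V\|_{L^\infty}$ (hence $|P(t)| \le |p| + Ct$) with $|\dot Q| \le |P|^{\alpha-1}$ to obtain apriori estimates that prevent finite-time blow-up of either $Q$ or $P$. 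For the regular case $\alpha = 2$, Picard--Lindel\"of applies directly and uniqueness follows immediately, so the interesting regime is $1 < \alpha < 2$.

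For uniqueness in this regime I would split according to the initial datum. When $p \ne 0$, continuity forces $P(t) \ne 0$ on some interval $[0,t_1)$ on which the Hamiltonian vector field is $C^\infty$; classical uniqueness applies, and by iterating we extend unique solvability until the first time at which $P$ possibly hits zero. When $p = 0$ but $\p_Q V(q) \ne 0$, the equation $\dot P(0) = -\p_Q V(q) \ne 0$ forces $|P(t)| \ge ct$ on some $(0, t_1]$ with $c>0$. On that interval the effective Lipschitz constant of $\p_P T$ along both trajectories is at most $O(|P|^{\alpha-2}) = O(t^{\alpha-2})$, which is integrable near zero precisely because $\alpha > 1$; a weighted Gr\"onwall inequality applied to the integral form of the difference of two solutions then closes the estimate and forces the two trajectories to coincide.

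The hardest case is $p = 0$ together with $\p_Q V(q) = 0$, where $(Q(t), P(t)) \equiv (q, 0)$ is a constant solution but the non-Lipschitz character of $\p_P T$ at zero in principle permits other trajectories to branch off (a familiar pathology of ODEs of the form $\dot x = |x|^{\gamma}$ with $\gamma<1$). Here my plan is to invoke the regularization scheme that the paper itself introduces in the next subsection, replacing $\kin$ by the smoothed symbol $\kin^\delta$ of \eqref{def:kinetic-delta}: for each $\delta > 0$ the regularized flow admits a unique $C^\infty$ solution $(Q^\delta, P^\delta)$, and uniform bounds inherited from Assumption \ref{asp:global_lipschitz} together with an Arzel\`a--Ascoli argument let us extract a subsequential limit as $\delta \to 0^+$. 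I expect this regularization-limit selection principle to furnish the canonical solution in the critical case, to agree with the classical unique solution in the first two cases, and to dovetail naturally with the $\delta$-regularized framework used in Section 3 for the convergence analysis; this is the step I anticipate as the main technical point of the proof.
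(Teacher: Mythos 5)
Your treatment of the first two cases tracks the paper's actual proof closely. The paper also reduces immediately to the case $p=0$, $\partial_Q V(q)\neq 0$, and there runs a Picard iteration whose key quantitative input is exactly your lower bound: it proves $|P_n(t)|/t \ge \tfrac12|\partial_Q V(q)|$ for all iterates, so that the Hessian bound $|\partial_P^2 T(P)|\le K|P|^{\alpha-2}$ produces the integrable weight $t^{\alpha-2}$ in the contraction estimate. The paper extracts existence and uniqueness simultaneously from the iterates rather than via a separate weighted Gr\"onwall on the difference of two solutions, but the mechanism is the same. One detail you will need that the paper handles explicitly: the mean-value bound involves $\partial_P^2 T$ evaluated on the segment joining the two momenta being compared, so you must also keep that whole segment at distance of order $t$ from the origin; the paper does this via $|P_1+\theta(P_2-P_1)|\ge |P_1|-|P_2-P_1|\ge \tfrac12 |\partial_Q V(q)|\,t$.

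The genuine gap is your third case, $p=0$ and $\partial_Q V(q)=0$. Regularizing to $T^{\delta}$ and extracting a subsequential limit by Arzel\`a--Ascoli is a \emph{selection principle}: it produces one solution of the limiting system (and in fact only recovers the constant solution, which you already have for free), but by construction it cannot exclude other solutions of the unregularized equations. Since existence is trivial at such a point, uniqueness is the entire content of the proposition there, and your plan does not address it. The paper does not do much better --- it disposes of this case in one sentence by asserting that the constant solution is the unique one --- but this is precisely the regime where uniqueness is genuinely delicate: for a smooth potential with $\partial_Q V(q_0)=0$ and $V$ locally concave at $q_0$, the system $\dot Q=|P|^{\alpha-2}P$, $\dot P=-\partial_Q V(Q)$ admits a self-similar ansatz $Q-q_0\sim a\,t^{\alpha/(2-\alpha)}$, $P\sim b\,t^{2/(2-\alpha)}$ branching off the rest point, so no compactness or limiting argument can establish uniqueness there. (The paper's subsequent analysis sidesteps the issue entirely: the cutoff $\chi_\omega$ excises a neighborhood $B_\omega$ of exactly these degenerate points, so they never enter the convergence theorem.) To close the proof of the proposition as stated you would need a direct argument that every solution launched from such a point is constant, which is what the paper claims but does not supply and what your regularization scheme cannot deliver.
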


The proof is straightforward when employing Picard successive approximation; therefore, we omit the details here.
% \begin{proof}\color{blue}
%     %The proof of this proposition involves certain technical methods that are not directly relevant to the FGA itself, and therefore, we have included it in the Appendix \ref{sec:proof_exist_solution} for reference.
%     Essentially a Picard successive approximation can prove this. 
% \end{proof}

\begin{corollary} \label{cor:continue_delta}
    Perturbed momentum center $P(t)$ and position center $Q(t)$ satisfy \eqref{eq:hamiltonian-flow} with modified kinetic symbol $\kindt(|P|)$ are continuous with respect to parameter $\delta$.
\end{corollary}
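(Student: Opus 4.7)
The plan is to prove continuity via an Arzelà-Ascoli compactness argument combined with the uniqueness from Proposition \ref{prop:unique_solution}. Fix $T>0$ and take any sequence $\delta_n\to\delta_0\in[0,\infty)$; I aim to show that $(Q^{\delta_n}(\cdot),P^{\delta_n}(\cdot))\to(Q^{\delta_0}(\cdot),P^{\delta_0}(\cdot))$ uniformly on $[0,T]$, which yields continuity at every $\delta_0$, including the critical case $\delta_0=0$ where the kinetic symbol loses regularity.

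First I would derive uniform-in-$\delta$ a priori bounds. Assumption \ref{asp:global_lipschitz} gives $|\dot P^{\delta}(t)|=|\partial_QV(Q^{\delta}(t))|\leq C$, so $|P^{\delta}(t)|\leq|p|+CT$ on $[0,T]$ independently of $\delta$. The regularized kinetic gradient satisfies
\begin{equation*}
\bigl|\partial_P \kindt(P)\bigr|=|P|\bigl(|P|^2+\delta^2\bigr)^{(\alpha-2)/2}\leq\bigl(|P|^2+\delta^2\bigr)^{(\alpha-1)/2},
\end{equation*}
so $|\dot Q^{\delta}|$ is uniformly bounded on $[0,T]$ whenever $\delta$ stays in a bounded interval. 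This gives uniform boundedness and equicontinuity of the family $\{(Q^{\delta},P^{\delta})\}_{\delta\in[0,\delta_*]}$ for any $\delta_*>0$.

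By Arzelà-Ascoli, any subsequence along $\delta_{n_k}\to\delta_0$ admits a further subsequence (not relabeled) such that $(Q^{\delta_{n_k}},P^{\delta_{n_k}})\to(Q^*,P^*)$ uniformly on $[0,T]$. I then pass to the limit in the Volterra forms
\begin{equation*}
Q^{\delta}(t)=q+\int_0^t\partial_P\kindt(P^{\delta}(\tau))\dtau,\qquad P^{\delta}(t)=p-\int_0^t\partial_QV(Q^{\delta}(\tau))\dtau.
\end{equation*}
Continuity of $\partial_QV$ together with uniform convergence handles the second equation. For the first, the bound above shows that the map $(P,\delta)\mapsto\partial_P\kindt(P)$ extends continuously to $\bbR^d\times[0,\infty)$ with the convention $\partial_P\kin^{0}(0):=0$, since $(|P|^2+\delta^2)^{(\alpha-1)/2}\to0$ as $(P,\delta)\to(0,0)$ for $\alpha>1$. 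Dominated convergence then yields that $(Q^*,P^*)$ solves the Hamiltonian flow with parameter $\delta_0$, and Proposition \ref{prop:unique_solution} identifies the limit as $(Q^{\delta_0},P^{\delta_0})$. Since every subsequence has a further subsequence converging to the same limit, the full family $(Q^{\delta},P^{\delta})$ converges as $\delta\to\delta_0$.

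The main obstacle is the joint continuity of $\partial_P\kindt(P)$ at the singular point $(P,\delta)=(0,0)$ when $1<\alpha<2$, which is precisely what the Hölder-type decay estimate above resolves; everything else is a standard compactness-plus-uniqueness argument leveraging the a priori bound supplied by Assumption \ref{asp:global_lipschitz}.
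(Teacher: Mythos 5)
Your proof is correct, but it takes a genuinely different route from the paper. The paper's proof of this corollary is a one-liner deferring to the Picard successive-approximation construction in the proof of Proposition \ref{prop:unique_solution}: one reruns the iteration scheme \eqref{eq:P_n}--\eqref{eq:Q_n} with $\kin$ replaced by $\kindt$, checks (as is done in Appendix \ref{sec:proof_P_equal_zero}) that the constants $M$, $L$, $N$, $h$ can be chosen uniformly in $\delta$, and reads off continuity in $\delta$ from the uniform convergence of the explicitly $\delta$-dependent iterates. You instead use a compactness-plus-uniqueness argument: uniform a priori bounds from Assumption \ref{asp:global_lipschitz} and the estimate $|\partial_P\kindt(P)|\leq(|P|^2+\delta^2)^{(\alpha-1)/2}$ give equicontinuity, Arzel\`a--Ascoli extracts uniform limits, the joint continuity of $(P,\delta)\mapsto(|P|^2+\delta^2)^{(\alpha-2)/2}P$ on $\bbR^d\times[0,\infty)$ (with value $0$ at the origin, valid precisely because $\alpha>1$) lets you pass to the limit in the Volterra equations, and Proposition \ref{prop:unique_solution} identifies the limit; the subsequence trick upgrades this to convergence of the full family. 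Your approach is cleaner in that it uses only the \emph{conclusion} of Proposition \ref{prop:unique_solution} as a black box rather than reopening its proof, and it isolates exactly where $\alpha>1$ is needed (the continuous extension of the regularized gradient at $(P,\delta)=(0,0)$). What it gives up is quantitativeness: the Picard route would in principle yield a modulus of continuity in $\delta$ (and uniformity over $(q,p)\in K_\omega$, which is implicitly what Proposition \ref{prop:lower_bound_dV} would like to invoke to get a $\delta$-independent constant), whereas the compactness argument is purely qualitative and, as written, is for fixed initial data $(q,p)$. For the corollary as stated, your argument fully suffices.
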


\begin{proof}
    The proof can be done in the same way as Proposition \ref{prop:unique_solution}.
\end{proof}

% Now we know that if $\delta$ small enough, we can use $\psiedt$ to approximate $\psie$. 

Additionally, the cutoff strategy can be applied to the modified Hamiltonian flow while maintaining the same properties as Prop. \ref{lemma:motion_lower_bound}:
\begin{proposition} \label{prop:lower_bound_dV}
    For dynamic system $H^{\delta}(Q, P)=\kindt(P) + V(Q)$ with $\delta>0$ small enough, given $\omega > 0$, $\Tf>0$ we have
    \begin{equation}
        |P(t,q,p)| + |\partial_{Q} V(Q(t,q,p))| \geq C_{\omega, \Tf} > 0,
    \end{equation}
    for $t\in[0, \Tf]$, $(q, p)\in K_{\omega}$, where $C_{\omega, \Tf} > 0$ is a constant independent of $\delta$.
\end{proposition}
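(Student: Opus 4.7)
The plan is to combine the strict lower bound at $\delta = 0$ provided by Lemma \ref{lemma:motion_lower_bound} with the continuous dependence of the Hamiltonian flow on the regularization parameter established in Corollary \ref{cor:continue_delta}, and then upgrade the pointwise conclusion to a uniform-in-$\delta$ estimate by a compactness argument on $[0,T] \times K_\omega$.

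First, I would introduce the shorthand $F^\delta(t,q,p) := |P^\delta(t,q,p)| + |\partial_Q V(Q^\delta(t,q,p))|$ and apply Lemma \ref{lemma:motion_lower_bound} to the unperturbed case ($\delta = 0$) to produce a strictly positive constant $C_{\omega,T}$ with $F^0(t,q,p) \geq C_{\omega,T}$ on $[0,T] \times K_\omega$. Because $V \in C^\infty$ and the flow depends continuously on $(t,q,p)$ by standard ODE theory and on $\delta$ by Corollary \ref{cor:continue_delta}, I would then check that $F^\delta$ is jointly continuous on the compact set $[0,T] \times K_\omega \times [0,\delta_0]$ for some fixed $\delta_0 > 0$.

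Next, I would finish by contradiction: if no uniform constant existed, one could extract sequences $\delta_n \to 0$ and $(t_n,q_n,p_n) \in [0,T] \times K_\omega$ with $F^{\delta_n}(t_n,q_n,p_n) \to 0$. By compactness of $[0,T] \times K_\omega$, pass to a subsequential limit $(t^*,q^*,p^*)$; joint continuity would then give $F^0(t^*,q^*,p^*) = 0$, contradicting the strict positivity just established. Thus there exists $\delta_0 > 0$ and a constant (possibly smaller, but still denoted $C_{\omega,T}$) such that $F^\delta \geq C_{\omega,T}$ uniformly in $\delta \in [0,\delta_0]$, which is exactly the claim.

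The main technical obstacle is justifying the joint continuity of $F^\delta$ across $\delta = 0$, since the regularized velocity field $\partial_P T^\delta$ converges to the singular $\partial_P T$ only in a limited way near $P = 0$. For $1 < \alpha \leq 2$ the map $\partial_P T(P) = |P|^{\alpha - 2} P$ is continuous and vanishes at $P=0$, and a direct computation shows $\partial_P T^\delta \to \partial_P T$ uniformly on compact subsets of $\mathbb{R}^d$; together with Assumption \ref{asp:global_lipschitz} this lets me invoke a standard continuous-dependence result for ODEs with continuous right-hand sides, the same technique already used in Proposition \ref{prop:unique_solution} and Corollary \ref{cor:continue_delta}, to obtain uniform convergence of the flows $(Q^\delta, P^\delta) \to (Q^0, P^0)$ on $[0,T] \times K_\omega$ as $\delta \to 0$. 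This uniform convergence, combined with continuity of $\partial_Q V$, supplies the joint continuity that the compactness argument needs.
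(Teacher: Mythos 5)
Your proposal is correct and follows essentially the same route as the paper: the paper's proof is a one-line appeal to the $\delta=0$ lower bound from Lemma \ref{lemma:motion_lower_bound} together with the continuity in $\delta$ from Corollary \ref{cor:continue_delta}. You simply make explicit the compactness/contradiction argument and the joint-continuity issue at $\delta=0$ that the paper leaves implicit.
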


%\begin{proof}
%By the continuity of $P(t), Q(t)$ with respect to $\delta$ and the fact that $|P(t,q,p)| + |\p_Q V(Q(t,q,p))| \geq C$, it follows that $ |P(t, q, p)| + |\p_{Q} V(Q(t))| \geq C_{\omega, \Tf}$ with $C_{\omega, \Tf} > 0$.
%\end{proof}

\begin{remark}
    The introduction of the singularity-removed kinetic symbol $\kindt$ leads to some new estimations relative to $\delta$ for most quantities involved. The only difference between the FGA formulations for \eqref{eq:Schrodinger-delta} and \eqref{eq:Schrodinger} is the kinetic symbol. To avoid confusion, we do not introduce new notation but continue to use $\amp$, $S$, $Q$, and $P$ for the amplitude, action, position center, and momentum center, referring to the same equations in Section \ref{sec:FGA} to construct $\psie_\FGAdt$.
\end{remark}

%The introducting of the singularity-removed kinetic symbol $\kindt$ leads to some new estimations relative to $\delta$ for most quantities involved.
%We note that the only difference between the FGAs for \eqref{eq:Schrodinger-delta} and \eqref{eq:Schrodinger} is the kinetic symbol. Therefore, the resulting remainders $\mathcal{R}$ and $\mathcal{R}^\delta$ are in the same form, except for changing $\kin$ to $\kindt$. To avoid confusion, we will not introduce new notations but continue to use $\amp$, $S$, $Q$, and $P$ for the amplitude, action, position center, and momentum center, referring to the same equations in Section \ref{sec:FGA} to construct $\psie_\FGAdt$.

%To proceed the convergence analysis, we derive some properties for the solutions of the Hamiltonian flow and quantities in the ODEs of FGA. First, we give two propositions to estimate $Q(t)$ and $P(t)$. Then, we provide the order analysis concerning $\delta$ for their higher-order derivatives in a compact form using the associated canonical transformation. With these results, we can further analyze the order of the $Z^{-1}$ matrix, amplitude $\amp$, and their higher order derivatives.

Before delving into the convergence analysis of the FGA, we require the following estimates for the Hamiltonian flow and its associated auxiliary quantities. First, we present two propositions to estimate $Q(t)$ and $P(t)$. Subsequently, we provide $\delta$-dependent estimates for their higher-order derivatives in a compact form using the associated canonical transformation.

\begin{proposition} \label{prop:estimate_P}
    For the Hamiltonian flow \eqref{eq:hamiltonian-flow} with the modified kinetic symbol $\kindt(P)$, given initial conditions $(q,p)\in K_{\omega}$ and an evolution time interval $[0, \Tf]$, when $P(t)$ passes through its zero point at $t=t_0$, there exists a small time interval, denoted as $[t_0, t_1]$, whose length depends only on $\omega$ and $\Tf$, such that for $t\in [t_0, \min(t_1, \Tf)]$, we have
    \begin{equation}
        |P(t)| \geq C_{\omega, \Tf} \, (t-t_0),
    \end{equation}
    where $C_{\omega, \Tf} > 0$ is a constant. Therefore, there are only a finite number of zero points of $|P(t)|$ within a given finite evolution time interval.
\end{proposition}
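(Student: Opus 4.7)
The plan is to combine the lower bound supplied by Proposition \ref{prop:lower_bound_dV} with a second-order Taylor expansion of $P(t)$ around the zero $t_0$. Since $P(t_0) = 0$, Proposition \ref{prop:lower_bound_dV} immediately forces $|\partial_Q V(Q(t_0))| \geq C_{\omega, T} > 0$. Because $\dot P = -\partial_Q V(Q)$ is itself differentiable in $t$ through the chain rule, the Taylor expansion with integral remainder yields
\begin{equation*}
    P(t) = -(t-t_0)\,\partial_Q V(Q(t_0)) + \int_{t_0}^t (t-s)\,\ddot P(s)\,\rd s,
\end{equation*}
with $\ddot P(s) = -\partial_Q^2 V(Q(s))\,\partial_P \kindt(P(s))$. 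The triangle inequality then gives
\begin{equation*}
    |P(t)| \geq (t-t_0)\,C_{\omega, T} - \tfrac{1}{2}\,(t-t_0)^2 \sup_{s\in[t_0,t]}|\ddot P(s)|,
\end{equation*}
and the claimed linear lower bound $|P(t)| \geq \tfrac{1}{2} C_{\omega, T}(t-t_0)$ follows once a uniform bound $|\ddot P(s)| \leq M_{\omega, T}$ is in hand, by setting $t_1 - t_0 := C_{\omega, T}/M_{\omega, T}$.

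The substantive step is therefore to produce $M_{\omega, T}$ uniformly in $\delta$. For $1 < \alpha \leq 2$ the regularized symbol yields $\partial_P \kindt(P) = P\,(|P|^2 + \delta^2)^{\alpha/2 - 1}$, which is bounded on bounded sets uniformly in $\delta$ because the potentially singular factor is tempered by the $|P|$ in front. Combined with Assumption \ref{asp:global_lipschitz}, this forces $|\dot P|$ and $|\dot Q|$ to be uniformly bounded on $[0,T]$, so that the trajectory starting in the compact set $K_\omega$ stays in a larger compact set $\Omega_{\omega, T}$ of $\delta$-independent size. Smoothness of $V$ then provides a uniform bound on $\partial_Q^2 V$ over $\Omega_{\omega,T}$, and multiplying with the previous bound on $\partial_P \kindt$ delivers the required $\delta$-independent $M_{\omega, T}$.

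The finiteness claim then follows by a spacing argument: each zero $t_0$ carries a right-neighborhood of length $t_1 - t_0 = C_{\omega,T}/M_{\omega,T}$ on which $|P(t)|$ is strictly positive, so any two distinct zeros in $[0,T]$ are separated by at least this uniform amount, giving at most $\lceil T M_{\omega,T}/C_{\omega,T}\rceil + 1$ zeros in total. The main obstacle I expect is tracking the $\delta$-dependence through the entire chain: the regularization $\sqrt{|p|^2 + \delta^2}$ was introduced precisely to smooth a singularity, so one must carefully verify that the very quantity whose expansion we are using, namely $\ddot P$, does not silently blow up as $\delta \to 0$. The saving feature is the strict inequality $\alpha > 1$, which neutralizes the factor $(|P|^2 + \delta^2)^{\alpha/2-1}$ once paired with $|P|$; absent this, the argument would not close uniformly.
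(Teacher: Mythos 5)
Your proof is correct, and it takes a genuinely different and more elementary route than the paper. The paper proves this proposition by re-running the Picard successive-approximation construction from the existence proof (Appendix~\ref{sec:proof_exist_solution}): there the iterates satisfy $|P_n(t)|/t \geq \tfrac12|\partial_Q V(q_0)|$ on an interval $[0,h]$, and Appendix~\ref{sec:proof_P_equal_zero} consists of carefully checking that the constants $M$, $L$, $N$, $c$, and hence $h$, can be chosen independently of $\delta$ and of the base point $q_0$, using Proposition~\ref{prop:lower_bound_dV} to bound $|\partial_Q V(q_0)|$ from below; the bound then passes to the limit in $n$. You instead exploit the fact that the regularized flow is $C^2$ in $t$ and write a second-order Taylor expansion with integral remainder at the zero $t_0$, which reduces everything to the same lower bound $|\partial_Q V(Q(t_0))|\geq C_{\omega,T}$ from Proposition~\ref{prop:lower_bound_dV} plus a $\delta$-uniform bound on $\ddot P = -\partial_Q^2 V(Q)\,\partial_P\kindt(P)$. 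Your treatment of the $\delta$-dependence is the right one and is correctly resolved: $|\partial_P\kindt(P)| = |P|\,(|P|^2+\delta^2)^{(\alpha-2)/2} \leq (|P|^2+\delta^2)^{(\alpha-1)/2}$ is bounded on the compact set furnished by Proposition~\ref{prop:bound_PQ}, uniformly in $\delta$, precisely because $\alpha>1$. What each approach buys: the paper's argument is economical in context because the Picard machinery must be built anyway to prove existence and uniqueness for the \emph{unregularized} symbol, whose Hessian is genuinely singular at $P=0$ and rules out a naive Taylor expansion; your argument is shorter and more transparent for the regularized symbol, which is all the proposition actually concerns, and it makes the source of the constant $t_1-t_0 = C_{\omega,T}/M_{\omega,T}$ completely explicit. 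Your spacing argument for the finiteness of the zero set is also valid: any two distinct zeros are separated by more than $C_{\omega,T}/M_{\omega,T}$, which caps their number on $[0,T]$ and in particular forbids accumulation.
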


The key to the proof is to observe that for $t\in [t_0, t_1]$, due to the continuity of the Hamiltonian flow, one can derive $|P(t)| \geq \frac{1}{2} |\partial_Q V(Q(t_0))| \, (t-t_0)$. Subsequently, the result follows from Proposition \ref{prop:lower_bound_dV}.

% \begin{proof}\color{blue}
% 	%The proof of this proposition is based on the proof of Proposition \ref{prop:unique_solution} with some technical details and as such we put it in the Appendix \ref{sec:proof_P_equal_zero}.
% 	The key of the proof is to notice that for $t\in [t_0, t_1]$, by the continuity of the Hamiltonian flow, one can obtain $|P(t)| \geq \frac{1}{2} |\partial_Q V(Q(t_0))| \, (t-t_0)$, and then the result follows by Prop. \ref{prop:lower_bound_dV}.
% \end{proof}

\begin{proposition} \label{prop:bound_PQ}
    Given $\Tf>0$, then for any $t\in [0, \Tf]$ and $(q, p) \in K_\omega$, we have
    \begin{equation} \label{eq:linear_bound_P}
        |p|-Ct \leq |P(t)| \leq |p|+Ct.
    \end{equation}
    Furthermore,
    $
        |P(t, q, p)| \leq M + C\Tf$, and $|Q(t, q, p)| \leq M + C_1(M + C\Tf)^{\alpha-1}\Tf
    $,
    % \begin{align}
    % 	|P(t, q, p)| &\leq M + C\Tf, \label{eq:upper_bound_P} \\
    % 	|Q(t, q, p)| &\leq M + C_1(M + C\Tf)^{\alpha-1}\Tf, \label{eq:upper_bound_Q}        
    % \end{align}
    where $M,\, C,\, C_1 > 0$ are constants.
\end{proposition}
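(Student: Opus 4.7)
The plan is to read off both bounds directly from the Hamiltonian ODEs \eqref{eq:hamiltonian-flow} using Assumption \ref{asp:global_lipschitz} together with the boundedness of $K_\omega$, handling only the bounded factor $\partial_P T^\delta$ with a little care because of the fractional exponent.

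First I would treat $P(t)$. Integrating \eqref{eq:hamiltonianP} in time gives
\begin{equation*}
P(t) = p - \int_0^t \partial_Q V(Q(\tau))\,\rd\tau,
\end{equation*}
and since Assumption \ref{asp:global_lipschitz} provides $\|\partial_Q V\|_{L^\infty} \le C$, the triangle inequality immediately yields $|p|-Ct \le |P(t)| \le |p|+Ct$, which is exactly \eqref{eq:linear_bound_P}. Next, because $(q,p) \in K_\omega$ forces $|p|^2+|q|^2 \le \omega^{-2}$, I set $M := 1/\omega$ and obtain the uniform bound $|P(t,q,p)| \le M + CT$ for all $t \in [0,T]$.

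Next I would estimate $Q(t)$. Using $T^\delta(P) = \tfrac{1}{\alpha}(|P|^2+\delta^2)^{\alpha/2}$ one computes
\begin{equation*}
\partial_P T^\delta(P) = (|P|^2+\delta^2)^{(\alpha-2)/2}\,P,
\end{equation*}
so
\begin{equation*}
|\partial_P T^\delta(P)| = |P|\,(|P|^2+\delta^2)^{(\alpha-2)/2} \le (|P|^2+\delta^2)^{(\alpha-1)/2} \le (|P|+\delta)^{\alpha-1}.
\end{equation*}
Integrating \eqref{eq:hamiltonianQ} in time and combining with the bound just obtained on $|P|$, for $\delta$ sufficiently small I get
\begin{equation*}
|Q(t,q,p)| \le |q| + \int_0^t |\partial_P T^\delta(P(\tau))|\,\rd\tau \le M + (M+CT+\delta)^{\alpha-1}\,T \le M + C_1 (M+CT)^{\alpha-1}\,T,
\end{equation*}
where $C_1$ is any constant with $(M+CT+\delta)^{\alpha-1} \le C_1 (M+CT)^{\alpha-1}$ for the range of $\delta$ under consideration (this is immediate since $\alpha-1 \in (0,1]$ and $\delta$ is small relative to $M+CT$).

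There is essentially no serious obstacle here: the only mild subtlety is that $\partial_P T^\delta$ is only H\"older of exponent $\alpha-1$ in $P$, so one must avoid differentiating it further and simply use the pointwise bound $(|P|^2+\delta^2)^{(\alpha-1)/2}$. The crucial point that makes this elementary estimate uniform in $\delta$ is that the $\delta$-regularization decreases the kinetic symbol's growth rate at $P=0$ but does not enlarge it at infinity, so the same constants $C$, $C_1$ work for all small $\delta$.
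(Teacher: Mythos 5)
Your proof is correct and follows essentially the same elementary route as the paper: bound $|P(t)|$ via the $L^\infty$ control of $\partial_Q V$ from Assumption \ref{asp:global_lipschitz}, then feed that into the $Q$-equation using the growth bound $|\partial_P T^\delta(P)| \lesssim |P|^{\alpha-1}$. The only (cosmetic, and arguably cleaner) difference is that you integrate the vector ODE for $P$ and apply the reverse triangle inequality, whereas the paper differentiates the scalars $|P|$ and $|P|^{-1}$ in time; both yield \eqref{eq:linear_bound_P} with the same constants.
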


\begin{proof}
    Differentiating $|P|$ with respect to time variable $t$, from \eqref{eq:hamiltonian-flow} we have
    $$
        \ddt{}|P| = \frac{1}{|P|} P \cdot \ddt{P} = - \frac{P}{|P|} \cdot \partial_Q H(Q,P).
    $$
    By Assumption \ref{asp:global_lipschitz},
    $$
        \ddt{}|P| \leq |\frac{P}{|P|}|\,|\partial_Q V(Q)| \leq C.
    $$
    Integrating with $t$ from $0$ to $\tau>0$ at both sides, by the initial conditions we have
    $$
        |P(\tau)| \leq |p| + C\tau.
    $$
    For another, differentiating $|P|^{-1}$ gives
    $$
        \ddt{}(\frac{1}{|P|})=-\frac{1}{|P|^3} P \cdot \partial_Q H(Q,P),
    $$
    therefore,
    $$
        |P|^2\ddt{}(\frac{1}{|P|}) = -\frac{1}{|P|} P \cdot \partial_Q H(Q,P) \leq  \frac{1}{|P|} |P \cdot \partial_Q H(Q,P)| \leq C. %\frac{C}{|P|^2},
    $$
    % \ie
    % $$
    % |P|^2 \ddt{}(\frac{1}{|P|}) \leq C.
    % $$
    Integrating with $t$ from $0$ to $\tau>0$ we have
    $
        |P(\tau)| \geq |p|-C\tau.
    $
    Since $(q, p)\in K_\omega$ and $t\in [0,\Tf]$, we obtain $|P(t, q, p)| \leq M + C\,\Tf$.

    In the same way, differentiating $|Q|$ with $t$ we have
    $$
        \ddt{}|Q| = \frac{1}{|Q|} Q \cdot \partial_P H \leq C_1 |P|^{\alpha-1}.
    $$
    Then integrate with time variable $t$ at both sides, note that $(q, p)\in K_\omega$ and $|P| \leq M + C\,\Tf$, we obtain $|Q| \leq M + C_1(M + C\,\Tf)^{\alpha-1}\,\Tf$.
\end{proof}

For later discussion, we introduce a notation for higher order derivatives with respect to $(q,p)$ as follows. For $u\in C^{\infty}(\mathbb{R}^{2d}, \mathbb{C})$ and $k\in\mathbb{N}$, define
\begin{equation}
    \Lambda_{k, \omega}[u] = \max_{|\beta_q|+|\beta_p|\leq k} \sup_{(q,p)\in K_\omega} |\partial_q^{\beta_q}\partial_p^{\beta_p}u(q, p)|,
\end{equation}
where $\beta_q$ and $\beta_p$ are multi-indices corresponding to $q$ and $p$, respectively.

The convergence analysis of the FGA  relies on the boundedness of the derivatives of the auxiliary functions associated with the Hamiltonian flow. A central result is encapsulated in the following proposition:
\begin{proposition} \label{prop:bound_F}
    Given $\Tf>0$, there exists a constant $C$, independent of $\delta$, such that
    %\begin{subequations} \label{eq:bound_F}
    \begin{align}  \label{eq:bound_F}
        \sup_{t\in [0, \Tf]} \Lambda_{0,\omega} \left[F(t)\right] < C \;\;\text{ and }\;
        \sup_{t\in [0, \Tf]} \Lambda_{1,\omega} \left[F(t)\right] < C\, \left(\abs{P(t)}^2+\delta^2\right)^{\f{\alpha-2}2}. %\text{ for } k \geq 1.
    \end{align}
    %\end{subequations}
    %where  we have defined $\npdt(t):=\left(\abs{P(t)}^2+\delta^2\right)^{1/2}$.
\end{proposition}
We note that for conventional FGAs, the uniform boundedness of $\Lambda_{k,\omega}\left[F(t)\right],\,k\geq0$ can be directly established using Gr\"onwall's inequalities, provided the coefficients of the PDE are sufficiently smooth \cite{LuYa:CPAM}. However, in the case of the \FSE, the singularity of the kinetic symbol introduces $\delta$-dependent estimates in \eqref{eq:bound_F}. A key observation is that the $\delta$-dependent singularity occurs only locally near $P(t)=0$. This localization allows us to control the remainder term in the PDE through time integration, ultimately ensuring convergence as demonstrated in Section \ref{sec:convergence}. The detailed proof of this proposition is deferred to Appendix \ref{app:bounds-F}, and here we will only present some of its direct corollaries.
\begin{corollary} \label{cor:bound_Z}  \label{prop:bound_Z} \label{prop:bound_a}
    Given $\Tf>0$, for the function $g(t)$ being any of $Z$, $Z^{-1}$, or $\tilde\amp$, there exist a constant $C>0$ such that
    %\begin{subequations}\label{eq:bound_Z}
    \begin{align} \label{eq:bound_Z}
        %	    \sup_{t\in [0, \Tf]} \Lambda_{0,\omega} \left[g(t)\right] &\leq C, 
        %		\\
        %		\sup_{t\in [0, \Tf]} \Lambda_{1,\omega} \left[g(t)\right] & \leq C \int_0^t \left(\npdt(s)\right)^{{\alpha-3}}\,\rd s,
        %		\\
        %		\sup_{t\in [0, \Tf]} \Lambda_{2,\omega} \left[g(t)\right] &\leq C \int_0^t \npdt^{\alpha-4}(s)\,\rd s + C \int_0^t \int_0^s \npdt^{\alpha-3}(s) \npdt^{\alpha-3}(s_1)\,\rd s_1\rd s. 
        \sup_{t\in [0, \Tf]} \Lambda_{0,\omega} \left[g(t)\right] < C \;\;\text{ and }\;
        \sup_{t\in [0, \Tf]} \Lambda_{1,\omega} \left[g(t)\right] < C\, \left(\abs{P(t)}^2+\delta^2\right)^{\f{\alpha-2}2}.
    \end{align}
    %\end{subequations}
\end{corollary}
\begin{proof}
    These results follow essentially the same proof in \cite[for Lemma 5.1--5.3]{LuYa:CPAM}, expect that the given bounds for $\Lambda_{1,\omega} \left[F(t)\right]$ are now $\delta$-dependent. One can show that $Z$ is invertible, and furthermore, $Z$, $Z^{-1}$, and $\tilde\amp$ are in the same order as $F$, and so do their corresponding partial derivatives. We omit the details of the proof here.
\end{proof}

\subsection{The modified FGA and the associated remainder}\label{sec:modified-remainder}
To define the FGA solution $\psie_\FGAdt$ for the modified Schrödinger equation \eqref{eq:Schrodinger-delta}, we proceed by following similar derivation steps used to obtain equation \eqref{eq:SchrFGA-4} in Section \ref{sec:FGA-derivation}. This leads us to:
\begin{equation}\label{eq:SchrFGA-delta}
    \Big(\ri\veps\p_t-\kindt(-\ri\veps\p_x) - V(x)\Big)\psie_\FGAdt  = \veps^2\,\mathcal{R}^\delta\,\amp.
\end{equation}
Here, $\mathcal{R}^\delta$ mirrors the form of $\mathcal{R}$ from equation \eqref{eq:reminder}, with the kinetic symbol $\kin$ replaced by $\kindt$. However, as indicated in equation \eqref{eq:RT}, this formulation involves up to 3rd-order derivatives of auxiliary quantities, such as $\partial_z^3\amp$ and $\partial_z^3Z^{-1}$.
Despite this requirement, our current estimates provided in Proposition \ref{prop:bound_F} and Corollary \ref{prop:bound_Z} are limited to derivatives of order 0 and 1. Additionally,  an $O(\delta^{\alpha-2})$ blow-up may occur when $P(t)=0$,  primarily due to the insufficient smoothness of the kinetic symbol $\kin$.
To address this challenge, it is necessary to carefully revisit the asymptotic analysis in \eqref{eq:kin_taylor} and \eqref{eq:kinpsi-fourier}. Note that in obtaining \eqref{eq:RT}, we applied Lemma \ref{lemma:xi-P} for multi-times. To reduce the reliance on higher-order derivatives, we ``roll back" for $(\xi-P)^2$ term and keep the $(\xi-P)^3$ without applying Lemma \ref{lemma:xi-P} . This yields:
\begin{align}\label{eq:RT-roll-back}
    % \begin{split}
    %     R_T & = {} - \f\ri{2\veps} (\xi-P)_{j} \left(\partial_{z_{l}}\left(\amp\, \p_{P_{jk}}^2\kindt(P) \,Z_{k l}^{-1}\right) \right)
    %     + \f1{6\veps^2} (\xi-P)^3\,\amp\,\dkindt3(P,\xi) ,\\
    %     &=: R_1+R_2,\\
    % \end{split}
    R_T^\delta & = \underbrace{ - \f\ri{2\veps} (\xi-P)_{j} \,\partial_{z_{l}}\!\left(\amp\, \p_{P_{jk}}^2\kindt(P) \,Z_{k l}^{-1}\right) }_{=:R_1}
    \,+\, \underbrace{ \f1{6\veps^2} (\xi-P)^3_{jkl}\,\amp\,\dkindt3_{jkl}(P,\xi-P) }_{=:R_2} ,
\end{align}
where $\displaystyle\dkindt3_{jkl}(P,\xi-P)=3\int_0^1(1-\tau)^2\,\p^3_{P_{jkl}}\kindt(P+\tau(\xi-P)) \, \rd\tau$.
For simplicity of notation, we will omit the sub-indices in the remainder terms and instead adopt a one-dimensional representation.
%The remainder estimate then follows a careful analysis of the behavior of $R_1$ and $R_2$ by splitting the integration domain and using the above bounds, together with the cutoff strategy and the boundedness of the amplitude and its derivatives (see Corollary~\ref{cor:bound_Z}). 
%Let's start with the $R_2$ term which is relatively simpler than $R_1$.

\subsubsection{Estimate for \texorpdfstring{$R_1$}{R1}}

To estimate $\norm{\tFIO\left( R_1 \right)\hpsieo}_{L^2}$, for $v\in L^2(\bbR_\xi^d)$ we compute
\begin{equation}
    \begin{aligned}
        \langle v, \tFIO\left( R_1 \right)\hpsieo \rangle
         & =
        \frac{-\ri}{(2\pi\veps)^{3d/2}} \int_{\bbR^{4d}} \overline{v}(\xi)\, \e^{\ri \tilde{\Phi}(t,\xi,\zeta,q,p)/\veps} \,\f{\xi-P}{2\veps}\,
        \\& ~~~~~~~~~~~~~~~~~~~~~~~~~~~{}\times\partial_{z}\left(\amp\, \p_{P}^2\kindt(P) \,Z^{-1}\right)\, \hpsieo(\zeta)\, \dze \dbq \dbp \dxi
        \\& = \int_{\bbR^{2d}} \dbq \dbp \;  \tilde{v}(Q,P)
        \,\partial_{z}\!\left(\amp\, \p_{P}^2\kindt(P) \,Z^{-1}\right) \, \e^{\ri S/ \veps} \, (\FBI \hpsieo)(q,p) ,
        \label{eq:R1-Holder}
    \end{aligned}
\end{equation}
where for the last equality we have defined
\begin{align*}
    \tilde{v}(Q,P) & := \frac{1}{2^{d}(\pi\veps)^{3d/4}}\int_{\bbR^d} \f{\xi}{\veps} \, \e^{-\frac{\ri}{\veps} Q\cdot\xi - \frac{1}{2\veps}\abs\xi^2} \, \overline{v}(\xi+P) \dxi.
\end{align*}
To use H\"older's inequalities for \eqref{eq:R1-Holder}, on one hand, we compute
\begin{equation}
    \begin{aligned}
        \norm{\tilde{v}}_{L^2(\bbR^{2d})}^2
        = & ~ \frac{1}{2^{2d}(\pi\veps)^{3d/2}}\int_{\bbR^{4d}} \f{\xi\cdot\zeta}{\veps^2} \, \e^{-\frac{\ri}{\veps} Q\cdot(\xi-\zeta) - \frac{\abs\xi^2}{2\veps} - \frac{\abs\zeta^2}{2\veps}} \, v(\xi+P) &
        \\&~~~~~~~~~~~~~~~~~~~~~~~~~~~~~~~~~~~~~~~~\times\overline{v}(\zeta+P)\dxi\dze \dbQ \dbP
        \\= &~ \frac{1}{2^{d}(\pi\veps)^{d/2}}\int_{\bbR^{2d}} \f{\abs\xi^2}{\veps^2} \, \e^{ - \frac{\xi^2}{\veps} } \, \abs{v(\xi+P)}^2 \dxi \dbP
        %\\\lesssim &~ \frac{1}{(\pi\veps)^{d/2}}\int_{\bbR^{d}} \f{\xi^2}{\veps^2} \, \e^{ - \frac{\xi^2}{\veps} } \dxi \, \norm{v}_{L^2(\bbR^d)}^2
        \lesssim~ \veps^{-1}\norm{v}_{L^2(\bbR^d)}^2 .
        \label{eq:R1-v-bound}
    \end{aligned}
\end{equation}
On the other hand, by the estimates \eqref{eq:bound_F} and \eqref{eq:bound_Z}, we get
\begin{equation}\label{eq:R1-psi-bound}
    \norm{ \partial_{z}\!\left(\amp\, \p_{P}^2\kindt(P) \,Z^{-1}\right) \, \e^{\ri S/ \veps} \, (\FBI \hpsieo) }_{L^2(\bbR^{2d})}^2
    \lesssim \int_{\bbR^{2d}} \left(\abs{P}^2+\delta^2\right)^{\alpha-3} \, \left|(\FBI\hpsieo)\right|^2 \dbq \dbp .
    %\\ = \int_{K_\omega} \left(\abs{P}^2+\delta^2\right)^{\alpha-3} \, \left|(\FBI\hpsieo)\right|^2 \dbq \dbp + O(\veps^\infty) \qquad\text{(by Assumption \ref{asym-high-freq})}.
\end{equation}
By substituting \eqref{eq:R1-v-bound} and \eqref{eq:R1-psi-bound} back to \eqref{eq:R1-Holder} we obtain
\begin{align}\label{eq:R1-bound}
    \left| \langle v, \tFIO(R_1)\hpsieo \rangle \right|
     & \lesssim \veps^{-\f12} \,\norm{v}_{L^2(\bbR^{d})}
    \,\mathcal{N}_{\alpha}^\delta(t),
\end{align}
where we have defined
$\displaystyle
    \mathcal{N}_{\alpha}^\delta(t) =
    \left(\int_{\bbR^{2d}} \left(\abs{P}^2+\delta^2\right)^{\alpha-3} \, \left|(\FBI\hpsieo)\right|^2 \dbq \dbp\right)^{\f12}.
$

\subsubsection{Estimate for \texorpdfstring{$R_2$}{R2}} \label{sec:R2-bound}

Note that,
$
    \abs{\p_P^3\kindt(P)} \leq C \left(\abs{P}^2+\delta^2\right)^{\f{\alpha-3}2}.
$
Then for given $r=r(\veps)>1$,
there exist bounded functions $f_1=f_1(P,\xi)$ and $f_2=f_2(P,\xi)$ such that
\begin{equation}
    \dkindt{3}(P,\xi) =
    \begin{cases}
        %f_1(P,\xi)\left((\abs{P}-r\sqrt\veps)+\delta\right)^{\alpha-3}, & \text{ if } \abs{\xi}\leq r\sqrt{\veps} \text{ and } \abs{P} > \sqrt\veps;\\ 
        %f_\text{i}(P,\xi)\,\left(g^{\delta,k}_\alpha(P)\right)^2, & \text{ if } \abs{\xi}\leq r\sqrt{\veps} ; \\
        f_1(P,\xi)\,g^{\delta}_\alpha(P), & \text{ if } \abs{\xi}\leq r\sqrt{\veps} ; \\
        f_2(P,\xi)\,\delta^{\alpha-3},    & \text{ if } \abs{\xi}  >  r\sqrt{\veps},  \\
    \end{cases}
\end{equation}
Here, $g^{\delta}_\alpha(P)$ is introduced to uniformly bound the singular behavior near $P=0$ by taking the minimum between the regularization parameter and the shifted momentum term, ensuring the expression remains finite:
\[
    g^{\delta}_\alpha(P) :=
    \begin{cases}
        \left(\abs{P}-r\sqrt\veps+\delta\right)^{\alpha-3}, & \text{ if } \abs{P} > r\sqrt\veps;    \\
        \delta^{\alpha-3},                                  & \text{ if } \abs{P} \leq r\sqrt\veps. \\
    \end{cases}
\]
To estimate $\norm{\tFIO\left( R_2 \right)\hpsieo}_{L^2}$, for $v\in L^2(\bbR_\xi^d)$ we compute
%\begin{equation}
\begin{align}
    \langle v, \tFIO\left( R_2 \right)\hpsieo \rangle
     & =
    \frac{1}{(2\pi\veps)^{3d/2}} \int_{\bbR^{4d}} \overline{v}(\xi)\, \e^{\ri \tilde{\Phi}(t,\xi,\zeta,q,p)/\veps} \,\f{(\xi-P)^3}{6\veps^2} \,\amp\,\dkindt3\, \hpsieo(\zeta)\, \dze \dbq \dbp \dxi
    \nonumber \\& = \quad \int_{\bbR^{2d}} \dbq \dbp \;  \left(\overline{v}_1(Q,P)g_{\alpha}^{\delta}(P) + \overline{v}_2(Q,P)\delta^{\alpha-3}\right) \, \amp\, \e^{\ri S/ \veps} \, (\FBI \hpsieo)(q,p)
    \label{eq:R2-Holder}
    %\nonumber
\end{align}
%\end{equation}
where for the last equality we have defined
\begin{align*}
    {v}_1(Q,P) & := \frac{1}{2^{d}(\pi\veps)^{3d/4}}\int_{\abs\xi\leq r\sqrt\veps} \f{\xi^3}{6\veps^2} \, \e^{\frac{\ri}{\veps} Q\cdot\xi - \frac{1}{2\veps}\abs\xi^2} \, v(\xi+P) \, f_1(P,\xi)\dxi, %& f_1^c = f_1 \,\chi_{\xi\leq r\sqrt\veps},  
    %\quad\text{and}\quad
    \\
    {v}_2(Q,P) & := \frac{1}{2^{d}(\pi\veps)^{3d/4}}\int_{\abs\xi   > r\sqrt\veps} \f{\xi^3}{6\veps^2} \, \e^{\frac{\ri}{\veps} Q\cdot\xi - \frac{1}{2\veps}\abs\xi^2} \, v(\xi+P) \, f_2(P,\xi)\dxi. %& f_2^c = f_2 \,\chi_{\xi> r\sqrt\veps}.                          
\end{align*}
To use H\"older's inequalities for \eqref{eq:R1-Holder}, on one hand, we compute
\begin{subequations}\label{eq:R2-v-bound}
    %\begin{equation}
    \begin{align}
        \norm{{v}_1}_{L^2(\bbR^{2d})}^2
        = & ~ \frac{1}{2^{d}(\pi\veps)^{d/2}}\int_{\bbR^{2d}} \f{\abs\xi^6}{36\veps^4} \, \e^{ - \frac{\abs\xi^2}{\veps} } \, \abs{v(\xi+P)}^2\, \abs{f_1(P,\xi)}^2 \,\chi_{\abs\xi\leq r\sqrt\veps}\,\dxi \dbP
        \nonumber                                                                                                                                                                                               \\\lesssim &~ \frac{1}{(\pi\veps)^{d/2}}\int_{\bbR^{d}} \f{\abs\xi^6}{\veps^4} \, \e^{ - \frac{\abs\xi^2}{\veps} } \dxi \, \norm{v}_{L^2(\bbR^d)}^2
        \lesssim~ \veps^{-1}\norm{v}_{L^2(\bbR^d)}^2 ,
        %\qquad\qquad\qquad\text{and}
        \\
        %\end{align}
        %\end{equation}
        %\begin{equation}
        %\begin{align}
        \norm{{v}_2}_{L^2(\bbR^{2d})}^2
        = & ~ \frac{1}{2^{d}(\pi\veps)^{d/2}}\int_{\bbR^{2d}} \f{\abs\xi^6}{36\veps^4} \, \e^{ - \frac{\abs\xi^2}{\veps} } \, \abs{v(\xi+P)}^2\, \abs{f_2(P,\xi)}^2 \,\chi_{\abs\xi\geq r\sqrt\veps}\,\dxi \dbP
        \nonumber
        \\\lesssim &~ \frac{1}{(\pi\veps)^{d/2}}\int_{\abs\xi>r\sqrt\veps} \f{\abs\xi^6}{\veps^4} \, \e^{ - \frac{\abs\xi^2}{\veps} } \dxi \, \norm{v}_{L^2(\bbR^d)}^2
        \lesssim~ \veps^{\f{d}2-1}\,\emr.\,%\norm{v}_{L^2(\bbR^d)}^2 .
    \end{align}
    %\end{equation}
\end{subequations}
On the other hand, we estimate
\begin{subequations}\label{eq:R2-psi-bound}
    \begin{align}
        \norm{ g^{\delta}_\alpha(P)\, \amp\, \e^{\ri S/ \veps} \, \, (\FBI \hpsieo) }_{L^2(\bbR^{2d})}^2
         & \lesssim \int_{\bbR^{2d}} \abs{g_{\alpha}^{\delta}(P)}^2 \, \left|(\FBI\hpsieo)\right|^2 \dbq \dbp,
        \\
        %\begin{align}
        \norm{ \delta^{\alpha-3}\, \amp\, \e^{\ri S/ \veps} \, (\FBI \hpsieo) }_{L^2(\bbR^{2d})}^2
         & \lesssim \delta^{2(\alpha-3)}\norm{\hpsieo}_{L^2(\bbR^{d})}^2.
    \end{align}
\end{subequations}
By substituting \eqref{eq:R2-v-bound} and \eqref{eq:R2-psi-bound} back to \eqref{eq:R2-Holder}, we obtain
\begin{equation}\label{eq:R2-bound}
    \abs{ \langle v, \tFIO(R_2)\hpsieo \rangle }
    \lesssim \veps^{-\f12} \left( \mathcal{G}_{\alpha}^{\delta}(t) +\veps^{\f{d}4} \emr\,\delta^{\alpha-3} \right)\norm{v}_{L^2(\bbR^{d})},
\end{equation}
where
$\displaystyle
    \mathcal{G}_{\alpha}^{\delta}(t) =
    \left(\int_{\bbR^{2d}} \abs{g_{\alpha}^{\delta}(P)}^2 \, \left|(\FBI\hpsieo)\right|^2 \dbq \dbp\right)^{\f12}.
$

\subsection*{Total remainder estimate}
Getting together \eqref{eq:RT-roll-back}, \eqref{eq:R1-bound}, and \eqref{eq:R2-bound}, we summarize these results in the following proposition for the remainder $R_\kin^\delta$:
\begin{proposition}\label{prop:RT-bound}
    For $t\in[0,\Tf]$, there exist a constant $C$ independent of $\delta$ and $\veps$, such that
    \begin{equation}\label{eq:RT-bound}
        \norm{\tFIO(R_\kin^\delta)\hpsieo}_{L^2(\bbR^d)} \leq C \, \veps^{-\f12}\,
        \left( \mathcal{N}_{\alpha}^{\delta}(t) + \mathcal{G}_{\alpha}^{\delta}(t) +
        \veps^{\f{d}4} \emr\, \delta^{\alpha-3}
        \right).
    \end{equation}
\end{proposition}

\section{Convergence of FGA to the \FSE}\label{sec:convergence}
We have gathered all the necessary estimates of the FGA quantities required for the convergence analysis. Our first theorem conducts the convergence of the FGA to the modified \FSE~\eqref{eq:Schrodinger-delta}.
\begin{theorem}\label{thm:conv-to-modfied-FSE}
    For modified \FSE~\eqref{eq:Schrodinger-delta} with $1 \leq \alpha \leq 2$, consider an initial condition $\varphi_0^{\veps}$ under Assumption \ref{asym-high-freq} and a potential function $V$ under Assumption \ref{asp:global_lipschitz}. Let $\psiedt$ and $\psie_{\FGAdt}$ be the exact solution and the FGA solution, respectively. Then for any $\delta>0$ and $t\in[0,\Tf]$,
    \begin{multline}\label{eq:psidt-psifga}
        \left\|\psiedt(t,\cdot)-\psie_{\FGAdt}(t,\cdot) \right\|_{L^2(\bbR^d)}
        \\\lesssim  \veps + \veps^{\f12}
        \left(
        \delta^{\alpha-\f52}\left(1+r\sqrt\veps/\delta\right)^{\f12} + %\delta^{2\alpha-\f72} + 
        \veps^{\f{d}4} \emr\, \delta^{\alpha-3}
        \right) .
    \end{multline}
    %where $C>0$ is a bounded constant that does not depend on $\varepsilon$ or $\delta$.
\end{theorem}
\begin{proof}
    Let $e = \psiedt-\psie_{\FGAdt}$, then by \eqref{eq:SchrFGA-delta} and \eqref{eq:SchrFGA-delta},
    \begin{equation*}
        \Big(\ri\veps\p_t-\kindt(-\ri\veps\p_x) - V(x)\Big)\,e(t,x)  = \veps^2\,\mathcal{R}^\delta\,\amp.
    \end{equation*}
    Applying Lemma 2.8 in \cite{hagedorn1998raising}, we have
    \begin{equation*}\label{eq:psidt-psifga-1}
        \begin{aligned}
            \norm{e(t,\cdot)}_{L^2(\bbR^d)}
             & \lesssim \norm{e(0,\cdot)}_{L^2(\bbR^d)}+\int_0^t \veps \norm{\left[\mathcal{R}^\delta\right](\tau,\cdot)}_{L^2(\bbR^d)} \dtau
            \\&\lesssim O(\veps^\infty)
            + \veps \int_0^t \norm{ \FIO(R^{\delta}_V)\varphi_0^{\veps} }_{L^2(\bbR^d)} \dtau
            + \veps \int_0^t \norm{ \tFIO(R_{\kin}^\delta)\hat{\varphi}_0^{\veps} }_{L^2(\bbR^d)} \dtau.
        \end{aligned}
    \end{equation*}
    By assuming $V$ is smooth enough and given $\norm{\psieo}_{L^2}=1$, we have the second term on the right side of the above estimate is bounnded by $\veps$. For the third term, note that
    \begin{equation*}\label{eq:int_Nt}
        \begin{aligned}
            \left(\int_0^t \mathcal{N}_{\alpha}^\delta(\tau)\dtau\right)^{2}
             & \lesssim \int_0^t \int_{\bbR^{2d}} \left(\abs{P(\tau,q,p)}^2+\delta^2\right)^{\alpha-3} \, \left|(\FBI\hpsieo)\right|^2 \dbq \dbp\dtau
            \\
             & = \int_{\bbR^{2d}} \int_0^t \left(\abs{P(\tau,q,p)}^2+\delta^2\right)^{\alpha-3} \dtau\, \left|(\FBI\hpsieo)\right|^2 \dbq \dbp
            \\
             & \lesssim \delta^{2\alpha-5}\int_{\bbR^{2d}}  \left|(\FBI\hpsieo)\right|^2 \dbq \dbp
            =\delta^{2\alpha-5},
        \end{aligned}
    \end{equation*}
    and
    \begin{equation*}\label{eq:int_Gt}
        \begin{aligned}
            \left(\int_0^t \mathcal{G}_{\alpha}^\delta(\tau)\dtau\right)^{2}
             & \lesssim \int_0^t \int_{\bbR^{2d}} \abs{g_{\alpha}^{\delta}(P(\tau,q,p))}^2 \, \left|(\FBI\hpsieo)\right|^2 \dbq \dbp\dtau
            \\
             & = \int_{\bbR^{2d}} \int_0^t \abs{g_{\alpha}^{\delta}(P(\tau,q,p))}^2 \dtau\, \left|(\FBI\hpsieo)\right|^2 \dbq \dbp
            \\
             & \lesssim \left(1+r\sqrt\veps/\delta\right)\delta^{2\alpha-5}\int_{\bbR^{2d}}  \left|(\FBI\hpsieo)\right|^2 \dbq \dbp
            =\left(1+r\sqrt\veps/\delta\right)\delta^{2\alpha-5}.
        \end{aligned}
    \end{equation*}
    and then by \eqref{eq:RT-bound}, we have for $1\leq\alpha\leq2$,
    \begin{equation*}\label{eq:psidt-psifga-2}
        \begin{aligned}
            \veps \int_0^t \norm{ \tFIO(R_{\kin}^\delta)\hat{\varphi}_0^{\veps} }_{L^2(\bbR^d)} \dtau
            % \lesssim 
            % \veps^{\f12} 
            %     \left(
            %           \delta^{\alpha-\f52} + 1 + \delta^{2\alpha-\f72} + 
            %           \veps^{\f{d}4} \emr\, \delta^{\alpha-2}\left( \delta^{\alpha-\f32} +1 + \delta^{-1} \right)
            %     \right)  
            % \\
            \lesssim
            \veps^{\f12}
            \left(
            \delta^{\alpha-\f52}\left(1+r\sqrt\veps/\delta\right)^{\f12} +
            \veps^{\f{d}4} \emr\, \delta^{\alpha-3}
            \right)  .
        \end{aligned}
    \end{equation*}
    This gives the estimate \eqref{eq:psidt-psifga}.
    %Then \eqref{eq:psidt-psifga} is proved by substituting \eqref{eq:int_Nt} to \eqref{eq:psidt-psifga-1}.
\end{proof}

Before proceeding to the main convergence theorem, we need the following proposition conducing the difference of the original \FSE~\eqref{eq:Schrodinger} and its modified version \eqref{eq:Schrodinger-delta}.
\begin{theorem}\label{thm:Schrodinger-delta}
    For $\delta>0$, suppose $\psie(t, x)$ and $\psiedt(t, x)$ are solutions of \eqref{eq:Schrodinger} and \eqref{eq:Schrodinger-delta} respectively. Then for $t\in[0,\Tf]$, we have
    \begin{equation}\label{eq:psi-psidelta-1}%\label{eq:psi-psidelta}
        \|\psie(t, \cdot) - \psiedt(t, \cdot)\|_{L^2(\mathbb{R}^d)} \leq C\,{\delta^{\alpha}}{\varepsilon}^{-1},
    \end{equation}
    where $C>0$ is a bounded constant that does not depend on $\varepsilon$ or $\delta$.
    % Furthermore, if $V(x) = c_0 + c_1\cdot x$ for some constants $c_0\in\bbR$ and $0\neq c_1\in\bbR^d$, then this upper bound can be sharpened to
    % \begin{equation}\label{eq:psi-psidelta-2}
    %     \|\psie(t, \cdot) - \psiedt(t, \cdot)\|_{L^2(\mathbb{R}^d)} \leq C\,\max\left\{\delta^2,\delta^{\alpha+\f12}\right\}{\varepsilon}^{-1}.
    % \end{equation}
\end{theorem}

\begin{proof}
    We re-write the modified Schrödinger equation \eqref{eq:Schrodinger-delta} as follows:
    \begin{equation*}
        \begin{split}
            \mathrm{i}\varepsilon \partial_t \psiedt - \kin(-\ri\veps\p_x) \psiedt - V \psiedt
             & = [\kindt - \kin](-\ri\veps\p_x) \psiedt,
            %\\
            %&=  \frac{1}{(2\pi\varepsilon)^d} \int_{\mathbb{R}^{2d}} \left( \kindt(\xi) - \kin(\xi) \right)\,\e^{\frac{\mathrm{i}}{\varepsilon}(x-y)\xi} \psiedt(y) \dby \dxi\\ 
            %&=  \frac{1}{(2\pi\varepsilon)^d} \int_{\mathbb{R}^{d}} \left( \kin(\sqrt{|\xi|^2+\delta^2}) - \kin(|\xi|) \right)\,\e^{\frac{\mathrm{i}}{\varepsilon}x\xi} \hat\psiedt(\xi) \dxi,
        \end{split}
    \end{equation*}
    For $1\leq\alpha\leq2$ it holds that
    \begin{equation*}
        \begin{aligned}
            0 \leq
            D_\delta(\xi):=\kindt(\xi) - \kin(\xi) \lesssim \left(|\xi|^2+\delta^2\right)^{\f{\alpha-1}2} \left( \sqrt{|\xi|^2+\delta^2} - |\xi| \right)
            = \f{\delta^2\left(|\xi|^2+\delta^2\right)^{\f{\alpha-1}2}}{\sqrt{|\xi|^2+\delta^2} + |\xi|}
        \end{aligned}.
    \end{equation*}
    Then it is straightforward to see that $\abs{D_\delta(\xi)}\lesssim\delta^\alpha$ uniformly. %and thus \eqref{eq:psi-psidelta-1} holds.
    %Consequently, the modified Schr\"odinger equation \eqref{eq:Schrodinger-delta} constitutes a bounded perturbation of the original Schr\"odinger equation \eqref{eq:Schrodinger}. 
    By directly applying Lemma 2.8 in \cite{hagedorn1998raising}, we obtain the desired estimate \eqref{eq:psi-psidelta-1}.
\end{proof}

Now we are ready to give the main theorem of this paper.
%which utilizes the following approximation chain,
%$$
%\psie \longrightarrow \psie_{\delta} \longrightarrow \psie_{\delta, \text{FGA}} \longrightarrow \psie_{\delta, \text{FGA}, \omega}.
%$$For the modified \FSE~\eqref{eq:Schrodinger-delta} with $1 < \alpha < 2$, consider an initial condition $\varphi_0^{\veps}$ satisifying Assumption \ref{asym-high-freq} and a potential function $V$ satisfying Assumption \ref{asp:global_lipschitz}. Let $\psiedt$ and $\psie_{\FGAdt}$ be the exact solution and the FGA solution
\begin{theorem}\label{thm:conv-to-FSE}
    Consider an initial condition $\varphi_0^{\veps}$ under Assumption \ref{asym-high-freq} and a potential function $V$ under Assumption \ref{asp:global_lipschitz}. Let $\psie$ and $\psie_{\FGAdt}$ be the exact solution and the FGA solution for the \FSE~\eqref{eq:Schrodinger}, respectively.
    There exist certain choices of $\delta = \delta(\varepsilon)$, such that for $\alpha>\f{12}7$,
    \begin{align}\label{eq:psie-psifga-1}
        \left\| \psie(t, \cdot) - \psie_{\FGAdt}(t,\cdot) \right\|_{L^2(\bbR^d)}
         & \lesssim  \left(\log\veps^{-1}\right)^{\f12}\,\veps^{\f{7}{12}\alpha-1}   .
    \end{align}
    % If $V(x) = c_0 + c_1\cdot x$ for some constants $c_0\in\bbR$ and $0\neq c_1\in\bbR^d$, then the above convergence result holds for all $ \alpha > \f32$ and we have
    % \begin{align}\label{eq:psie-psifga-2}
    %     \left\| \psie(t, \cdot) - \psie_{\FGAdt}(t,\cdot) \right\|_{L^2(\bbR^d)}
    %      & \lesssim  \left(\log\veps^{-1}\right)^{\f12}\,\veps^{\f{7}{2(5-\alpha)}-1}   .
    % \end{align}
\end{theorem}
\begin{proof}
    Theorems \ref{thm:conv-to-modfied-FSE} and \ref{thm:Schrodinger-delta} imply that
    \begin{align*}
         & \left\| \psie(t, \cdot) - \psie_{\FGAdt}(t,\cdot) \right\|_{L^2(\bbR^d)}
        \\ & \leq \left\| \psie(t,\cdot) - \psiedt(t,\cdot) \right\|_{L^2(\bbR^d)} + \left\| \psie_{\delta}(t,\cdot) - \psie_{\FGAdt}(t,\cdot) \right\|_{L^2(\bbR^d)}
        \\
         & \lesssim {\delta^\alpha}\,{\veps^{-1}} + \veps + \veps^{\f12} \left( \delta^{\alpha-\f52}\left(1+r\sqrt\veps/\delta\right)^\f12 + \veps^{\f{d}4} \emr\, \delta^{\alpha-3} \right)  .
    \end{align*}
    Here we have $\delta=\delta(\veps)>0$ and $r=r(\veps)>1$ to be determined.
    To guarantee convergence as $\veps\rightarrow0$, the first term requires $\delta=o(\veps^{\f1\alpha})$, and in the case $1\leq\alpha\leq2$ we can simplify the above inequality by keeping only the dominant terms
    \begin{align*}
        \left\| \psie(t, \cdot) - \psie_{\FGAdt}(t,\cdot) \right\|_{L^2(\bbR^d)}
         & \lesssim {\delta^\alpha}\,{\veps^{-1}} + r^{\f12}\,\veps^{\f34} \delta^{{\alpha-3}}  + \veps^{1/2}\,\emr\,\delta^{\alpha-3}.
    \end{align*}
    Choose $\delta=\veps^{\beta}$ and $r=r(\veps)>1$ such that the three terms on the right side of the above inequality balanced.
    To balance $\veps^{\f34+\beta(\alpha-3)}r^{\f12}$ and $\veps^{\f12+\beta(\alpha-3)}\,\emr$, we need $r\sim\left(\log\veps^{-1}\right)^{\f12}$.
    Then to balance $\veps^{\alpha\beta-1}$ and $\veps^{\f34+\beta(\alpha-3)}\,\left(\log\veps^{-1}\right)^{\f12}$, we need $\beta=\f{7}{12}$. Then it is easy to check for $\alpha>\f{12}7$ we have
    \begin{align*}
        \left\| \psie(t, \cdot) - \psie_{\FGAdt}(t,\cdot) \right\|_{L^2(\bbR^d)}
         & \lesssim  \left(\log\veps^{-1}\right)^{\f12}\,\veps^{\f{7}{12}\alpha-1}   .
    \end{align*}
    This completes the proof of the convergence result \eqref{eq:psie-psifga-1}.
    %And \eqref{eq:psie-psifga-2} can be proved similarly by using \eqref{eq:psi-psidelta-2} instead of \eqref{eq:psi-psidelta-1}.
\end{proof}

In the case of a linear potential, the above convergnece results can be further improved as follows.
\begin{theorem}\label{thm:conv-to-FSE-linear}
    For the \FSE~\eqref{eq:Schrodinger} with $1 < \alpha \leq 2$, consider an initial condition $\varphi_0^{\veps}$ under Assumption \ref{asym-high-freq} and a potential function $V(x) = c_0 + c_1\cdot x$ for some constants $c_0\in\bbR$ and $0\neq c_1\in\bbR^d$. Let $\psie$ and $\psie_{\FGAdt}$ be the exact solution and the FGA solution, respectively.
    There exist certain choices of $\delta = \delta(\varepsilon)$ such that
    \begin{align}\tag{\ref{eq:psie-psifga-1}'}\label{eq:psie-psifga-linear}
        \left\| \psie(t, \cdot) - \psie_{\FGAdt}(t,\cdot) \right\|_{L^2(\bbR^d)}
         & \lesssim  \left(\log\veps^{-1}\right)^{\f34}\,\veps^{\f{\alpha-1}{2}}   .
    \end{align}
\end{theorem}
\begin{proof}
    We focus our proof on the case of Gaussian initial conditions, i.e., $\varphi_0^{\veps}(x) = g^\veps(x):=(\pi\veps)^{-d/4}\,e^{-\abs{x-q_0}^2/(2\veps)}\,e^{\ri p_0\cdot(x-q_0)/\veps}$ for some $q_0,p_0\in\bbR^d$. In this case, we have an explicit formula for the FBI transform of $g^{\veps}$:
    \begin{equation*}
        \tilde{g}(q,p):=(\FBI g^{\veps})(q,p) = (2\pi\veps)^{-\f{d}2} \, \e^{\f{\ri}{2\veps} (p+p_0)\cdot(q-q_0)} \, \e^{-\f{\abs{q-q_0}^2+\abs{p-p_0}^2}{4\veps}} .
    \end{equation*}
    Then for a general initial condition $\varphi_0^{\veps}$ under Assumption \ref{asym-high-freq}, we can express $\varphi_0^{\veps}$ as a superposition of Gaussian wave packets via the (pseudo-) inverse FBI transformation \eqref{eq:FBI_inv}. Thus, by linearity of both the \FSE~\eqref{eq:Schrodinger-delta} and the FGA approximation, the convergence results still holds.

    In the case of a linear potential $V(x) = c_0 + c_1\cdot x$ for some constants $c_0\in\bbR$ and $0\neq c_1\in\bbR^d$, noting that $P(t,q,p) = p+c_1t$, we have more refined estimates for $\mathcal{N}_\alpha^\delta$ and $\mathcal{G}_\alpha^\delta$ than those in the proof of Theorem \ref{thm:conv-to-modfied-FSE}. For simplicity, we only present the $d=1$ case. In fact, for $1\leq\alpha\leq2$, we have
    \begin{equation*}
        \begin{aligned}
            \left(\mathcal{N}_{\alpha}^\delta(t)\right)^{2}
             & = \int_{\bbR^{2}} \left(\abs{p-c_1t}^2+\delta^2\right)^{\alpha-3} \, \left|(\FBI\hpsieo)\right|^2 \dbq \dbp
            \\ & \leq \int_{\bbR^{2}} \left(\abs{p-c_1t}^2+\delta^2\right)^{\alpha-3} \,  {(2\pi\veps)^{-{1}}}{\e^{-\f{\abs{q-q_0}^2+\abs{p-p_0}^2}{2\veps}}}  \dbq \dbp
            %\\ & = \int_{\bbR} \f{\e^{-{\abs{q-q_0}^2}/{2\veps}}}{(2\pi\veps)^{{1}/2}} \int_{\bbR} \left(\abs{p-c_1t}^2+\delta^2\right)^{\alpha-3} \,  \f{\e^{-{\abs{p-p_0}^2}/{2\veps}}}{(2\pi\veps)^{{d}/2}}  \dbp \dbq
            %\\ & = \int_{\bbR} \f{\e^{-{\abs{q-q_0}^2}/{2\veps}}}{(2\pi\veps)^{{1}/2}} \int_{\bbR} \left(\abs{p+p_0-c_1t}^2+\delta^2\right)^{\alpha-3} \,  \f{\e^{-{\abs{p}^2}/{2\veps}}}{(2\pi\veps)^{{1}/2}}    \dbp \dbq
            \\ & = {(2\pi\veps)^{-\f{1}2}}\int_{\bbR} \left(\abs{p+p_0-c_1t}^2+\delta^2\right)^{\alpha-3} \,  {\e^{-\f{\abs{p}^2}{2\veps}}}    \dbp\,.
        \end{aligned}
    \end{equation*}
    To estimate this integral, for $\alpha\in(1,2)$, we decompose the integral by $\abs{\xi}=r\veps$ and compute
    \begin{equation*}
        \begin{aligned}
            I(t)
            %&:= \int_{\bbR} \left(\abs{p+p_0-c_1t}^2+\delta^2\right)^{\alpha-3}  \e^{-\f{\abs{p}^2}{2\veps}}  \dbp
             & :=\int_{\abs{p}\geq r\sqrt\veps}~+~\int_{\abs{p}< r\sqrt\veps} \left(\abs{p+p_0-c_1t}^2+\delta^2\right)^{\alpha-3}  \e^{-\f{\abs{p}^2}{2\veps}} \dbp
            \\&\leq C \, {\delta^{2\alpha-6}} {\e^{-{r^2}/2}} + \int_{\abs{p}<r\sqrt\veps} {\left(\abs{p+p_0-c_1t}+\delta\right)^{2\alpha-6}}  \dbp
            \\&\lesssim {\delta^{2\alpha-6}} {\e^{-{r^2}/2}} +
            \left\{
            \begin{aligned}
                 & \,\delta^{2\alpha-5}                                       & , & \quad \text{if } \abs{p_0-c_1t}\leq r\sqrt\veps; \\
                 & \left(\abs{p_0-c_1t}-r\sqrt\veps+\delta\right)^{2\alpha-5} & , & \quad \text{if } \abs{p_0-c_1t}> r\sqrt\veps.
            \end{aligned}
            \right.
        \end{aligned}
    \end{equation*}
    So we have for $\delta<O(\sqrt\veps)$,
    % \begin{equation*}
    %     \mathcal{N}_{\alpha}^\delta(t)
    %     \lesssim \left(
    %     {\delta^{2\alpha-5}} +
    %     \left\{
    %     \begin{aligned}
    %          & \,\delta^{2\alpha-4}                                       & , & \quad \text{if } \abs{p_0-c_1t}\leq r\sqrt\veps; \\
    %          & \left(\abs{p_0-c_1t}-r\sqrt\veps+\delta\right)^{2\alpha-4} & , & \quad \text{if } \abs{p_0-c_1t}> r\sqrt\veps.
    %     \end{aligned}
    %     \right.
    %     \right)^{\f12}.
    % \end{equation*}
    \begin{equation*}
        \begin{aligned}
            \int_0^t \mathcal{N}_{\alpha}^\delta(\tau) \dtau
             & = (2\pi\veps)^{-\f{1}4} \int_0^t  I(\tau)^{\f12} \dtau
             & \lesssim \veps^{-\f{1}4}
            \left(
            r\,\sqrt\veps\,\delta^{\alpha-\f52}
            \right) + \veps^{-\f14}\delta^{\alpha-3}\,\e^{-{r^2/2}}.
            %=r\,\veps^{\f14}\,\delta^{\alpha-\f52} .
        \end{aligned}
    \end{equation*}
    And similarly,
    for $\mathcal{G}_\alpha^\delta(t)$:
    \begin{equation*}
        \begin{aligned}
            \left(\mathcal{G}_{\alpha}^\delta(t)\right)^{2}
             & = \int_{\bbR^{2}} \left(g_{\alpha}^{\delta}(P(t))\right)^{2} \, \left|(\FBI\hpsieo)\right|^2 \dbq \dbp
            \\ & \leq {(2\pi\veps)^{-\f{1}2}}\int_{\bbR} \left(g_{\alpha}^{\delta}({p+p_0-c_1t})\right)^{2} \,  {\e^{-\f{\abs{p}^2}{2\veps}}}    \dbp\,.
        \end{aligned}
    \end{equation*}
    We decompose the integral again by $\abs{p}=r\sqrt\veps$ and estimate
    \begin{equation*}
        \begin{aligned}
             & \int_{\abs{p}< r\sqrt\veps} \left(g_{\alpha}^{\delta}({p+p_0-c_1t})\right)^{2} \,  {\e^{-\f{\abs{p}^2}{2\veps}}} \dbp
            \\&\lesssim
            \left\{
            \begin{aligned}
                 & \,r\,\sqrt\veps\,\delta^{2\alpha-6}                        & , & \quad \text{if } \abs{p_0-c_1t}\leq r\sqrt\veps; \\
                 & \left(\abs{p_0-c_1t}-r\sqrt\veps+\delta\right)^{2\alpha-5} & , & \quad \text{if } \abs{p_0-c_1t}> r\sqrt\veps.
            \end{aligned}
            \right.
        \end{aligned}
    \end{equation*}
    Thus,
    we can estimate $\mathcal{G}_\alpha^\delta(t)$ as
    \begin{equation*}
        \begin{aligned}
            \int_0^t \mathcal{G}_{\alpha}^\delta(\tau) \dtau
             & \lesssim \veps^{-\f14}
            \left( r\,\sqrt\veps \, r^{\f12}\,\veps^{\f{1}4}\,\delta^{\alpha-3}
            + \delta^{\alpha-3}\,\e^{-{r^2/2}}\right).
            %=r\,\veps^{\f14}\,\delta^{\alpha-\f52} .
        \end{aligned}
    \end{equation*}
    Thus in the linear potential case, Theorem \ref{thm:conv-to-modfied-FSE} can be refined to
    \begin{equation}\tag{\ref{eq:psidt-psifga}'}\label{eq:psidt-psifga-linear}
        \left\|\psiedt(t,\cdot)-\psie_{\FGAdt}(t,\cdot) \right\|_{L^2(\bbR^d)}
        \lesssim  \veps +
        r^{\f32}\veps\,\delta^{\alpha-3} + %\delta^{2\alpha-\f72} + 
        \veps^{\f12+\f{d}4} \emr\, \delta^{\alpha-3} .
    \end{equation}

    Furthermore, the estimate in Theorem \ref{thm:Schrodinger-delta} can also be refined in the linear potential case. If $V(x) = c_0 + c_1\cdot x$ for some constants $c_0\in\bbR$ and $0\neq c_1\in\bbR^d$, then $\psiedt$ can be solved explicitly as
    \begin{equation*}
        \hat\psiedt(t,\xi) = \hpsieo(\xi+c_1 t)\exp\left(-\frac{\ri}{\veps}\int_0^t\kindt(\xi+c_1(t-s))\,\rd s - \f\ri\veps c_0t\right).
    \end{equation*}
    Therefore,
    \begin{equation*}
        \begin{aligned}
            \| D_\delta(\cdot) \hat\psiedt(t,\cdot) \|_{L^2(\bbR^d)}^2
             & = \int  D_\delta^2(\xi) \abs{\hpsieo(\xi+c_1t)}^2 \dxi
            %= \int  D_\delta^2(\xi-c_1t) \abs{\hpsieo(\xi)}^2 \dxi
            \\& = \int  D_\delta^2(\xi+p_0-c_1t) (\pi\veps)^{-d/2}\,e^{-\f{\abs{\xi}^2}{2\veps}} \dxi .
            %\\ & = \int  \left(\int_0^t D_\delta^2(\xi-c_1\tau) \dtau\right) \abs{\hpsieo(\xi)}^2 \dxi .
        \end{aligned}
    \end{equation*}
    Then a similar argument as previously fo $\mathcal{N}$ gives
    \begin{equation*}
        \int_0^t\| D_\delta(\cdot) \hat\psiedt(\tau,\cdot) \|_{L^2(\bbR^d)}\dtau
        \lesssim
        r\,\veps^{\f14}\,\max\left\{\delta^2,\delta^{\alpha+\f12}\right\} + \veps^{-\f14}\delta^{\alpha-2}\,\e^{-{r^2/2}}.
    \end{equation*}
    % \begin{equation*}
    %     \begin{aligned}
    %         \left(\int_0^t\| D_\delta(\cdot) \hat\psiedt(\tau,\cdot) \|_{L^2(\bbR^d)}\dtau\right)^2
    %          & \lesssim \int_0^t \int  D_\delta^2(\xi) \abs{\hat\psiedt(\tau,\xi)}^2 \dxi \dtau
    %         \\ & = \int_0^t \int  D_\delta^2(\xi-c_1\tau) \abs{\hat\psiedt(\tau,\xi-c_1\tau)}^2 \dxi \dtau
    %         \\ & = \int  \left(\int_0^t D_\delta^2(\xi-c_1\tau) \dtau\right) \abs{\hpsieo(\xi)}^2 \dxi .
    %     \end{aligned}
    % \end{equation*}
    % Noting that
    % \begin{equation*}
    %     \int_0^t D_\delta^2(\xi-c_1\tau) \dtau \leq \delta^4\,\max\{\delta^{2\alpha-3}, 1\},
    % \end{equation*}
    % we have
    % \begin{align*}\label{eq:psi-psidelta-remainder}
    %     \int_0^t\| D_\delta(\cdot) \hat\psiedt(\tau,\cdot) \|_{L^2(\bbR^d)}\dtau \lesssim \max\left\{\delta^2,\delta^{\alpha+\f12}\right\}\,\|\hat\psiedt(0,\cdot)\|_{L^2(\bbR^d)}.
    % \end{align*}
    Thus in the linear potential case, Theorem \ref{thm:Schrodinger-delta} can be refined to
    \begin{equation}\tag{\ref{eq:psi-psidelta-1}'}\label{eq:psi-psidelta-linear}
        \|\psie(t, \cdot) - \psiedt(t, \cdot)\|_{L^2(\mathbb{R}^d)} \lesssim r\,{\varepsilon}^{-\f34} \max\left\{\delta^2,\delta^{\alpha+\f12}\right\} + \veps^{-\f54}\delta^{\alpha-2}\,\e^{-{r^2/2}}.
    \end{equation}

    Then by combining \eqref{eq:psidt-psifga-linear} and \eqref{eq:psi-psidelta-linear}, and by choosing $r\sim\left(\log\veps^{-1}\right)^{\f12}$ and $\delta=\veps^{\f12}$ , we have for $1<\alpha\leq 2$
    \begin{equation*}
        \begin{aligned}
            \left\| \psie(t, \cdot) - \psie_{\FGAdt}(t,\cdot) \right\|_{L^2(\bbR^d)}
             & \lesssim r^{\f32}\,\veps^{\f{\alpha-1}2} .
        \end{aligned}
    \end{equation*}
    which completes the proof.
\end{proof}

% \begin{remark}
%     Although we discuss the fractional Laplacian operator $\frac{1}{\alpha}|P|^{\alpha}$, the analysis can be generalized to general kinetic operators if they satisfy the following assumption, which gives an order estimation for the singularity. \\
%     \indent
%     Let $\beta=(\beta_1, \beta_2, \cdots, \beta_d)\in\N^d$ be a multi-index. Kinetic operator $\kin(p)=\kin(|p|)$ and there exists $1<\alpha\leq 2$ such that
%     \begin{enumerate}
%         \item $|\kin(\xi)-\kin(\zeta)|\leq C(\left| |\xi|^\alpha-|\zeta|^\alpha \right|)$ for all $\xi$ and $\zeta$;
%         \item $|\p_p^\beta\kin(p)|\leq C|p|^{\alpha-|\beta|}$ for all $0\neq p\in\bbR^d$ and $|\beta|\leq6$.
%     \end{enumerate}
% \end{remark}

We conclude this section with some further remarks.

\begin{remark}
    The modified Schr\"odinger equation is introduced to remove the singularity of the fractional Laplacian, serving as an intermediate step in proving the convergence of the FGA.
    Additionally, the modified model \eqref{eq:Schrodinger-delta} is of independent interest in the study of semi-relativistic boson stars \cite{frohlich2007boson,lenzmann2007well}.  It is valuable to consider the convergence of the FGA solution $\psie_\FGAdt$ to the solution $\psiedt$ of \eqref{eq:Schrodinger-delta} for a given fixed $\delta>0$. In this context, the result in Theorem \ref{thm:conv-to-modfied-FSE} is far from optimal. Actually, for a fixed $\delta$, $\kindt$ can be regarded as sufficiently smooth, which ensures that $R_\kin^\delta$ in the form of \eqref{eq:RT} is $O(1)$. The analysis of \eqref{eq:SchrFGA-delta} becomes significantly simpler compared to the analysis performed to prove Poroposition \ref{prop:RT-bound} and Theorem \ref{thm:conv-to-modfied-FSE}. Consequently, $O(\veps)$-convergence of the FGA can be demonstrated. %This conclusion is also reflected in \eqref{eq:psidt-psifga-weak}.

    However, when $\delta=\delta(\varepsilon)\to0$ as $\varepsilon\to0$, the estimate above, although suboptimal, is crucial: it permits passing from the regularized model back to the original problem and thus proving convergence of $\psi^\varepsilon_{\delta,\mathrm{FGA}}$ to $\psi^\varepsilon$ in Theorem \ref{thm:conv-to-FSE}. The deterioration of the proven rate compared with the optimal $O(\varepsilon)$ known for the standard Schr\"odinger equation stems from technical steps in our analysis. In particular, repeated integration by parts and the need to control high-order derivatives in $(q,p)$ to extract explicit powers of $\varepsilon$ (see \eqref{eq:RT}) lead to losses, while Proposition \ref{prop:bound_F} only provides uniform bounds for zeroth- and first-order derivatives. These extra differentiations do not appear in the final FGA ansatz and therefore seem to be an artefact of the estimation strategy rather than an intrinsic limitation of the method. In particular, for a linear potential one can obtain improved bounds and prove convergence for $1<\alpha\leq2$. Numerical results in Section \ref{sec:num} show near-linear decay in $\varepsilon$, suggesting the theoretical rate can likely be sharpened.

    Motivated by these observations, we conjecture that the FGA for the \FSE\ attains the optimal $O(\varepsilon)$ error for $1<\alpha\leq2$, as in the classical Schr\"odinger case. Achieving this sharper result would require strengthening several technical ingredients in our analysis:
    %\begin{enumerate}
    %\item Improve the estimate of $D_\delta(\xi)=T^\delta(\xi)-T(\xi)$ used in Theorem \ref{thm:Schrodinger-delta} by exploiting its decay in $\xi$. This could improve the current bound $O(\delta^\alpha/\varepsilon)$ to $O(\delta^2/\varepsilon)$ (or similarly stronger bounds), reducing the contribution from the model regularization.
    % \item Extend Proposition \ref{prop:bound_F} to uniform bounds for higher-order derivatives, e.g.
    %       \[
    %           \Lambda_{k,\omega}[g(t)] \le C\,\big(|P|^2+\delta^2\big)^{(\alpha-k+1)/2}\qquad (k\ge2),
    %       \]
    %       so that the kinetic remainder $R_\kin^\delta$ in \eqref{eq:RT} can be estimated directly without repeated loss of powers of $\varepsilon$.
    %\item 
    (i) perform a more refined analysis of $R_\kin^\delta$, using sharper decompositions in $(\xi-P)$, localized cutoffs, and optimized H\"older-type estimates to replace the current coarse bounds in Proposition \ref{prop:RT-bound};
    %\item 
    and (ii) generalize the arguments in Theorem \ref{thm:conv-to-FSE-linear} for linear potentials to general potentials by leveraging the localized nature of the solution ansatz in phase-space.
    %\end{enumerate}
    Obtaining either of the improvements above would directly strengthen the error bounds and could restore the conjectured $O(\varepsilon)$ convergence rate. We leave these refinements for future work.

    Furthermore, our analysis can be extended to any Schr\"odinger-type equations defined by pseudo-differential operators, provided that the symbol is either smooth or has singularities that behave similarly to the fractional Laplacian.
\end{remark}

\section{Numerical tests}\label{sec:num}

In this section, we provide numerical examples in both one and two dimensions to demonstrate the accuracy of FGA for the \FSE~and verify its convergence rate with respect to $\veps$.

In our numerical experiments, we use FGA to compute solutions for various values of $\veps$ and compare them with the reference solutions obtained using the time-splitting spectral method.
%To ensure the boundary conditions do not introduce a significant error relative to the problem in the entire space for the time-splitting spectral method, we carefully choose a suitable spatial interval and final evolution time for the computations. 
The initial condition is selected in the classical WKB form, \ie,
\begin{equation*}
    \psi^{\veps}(x, t=0)=\varphi_0^{\veps}(x) = \sqrt{n_0(x)} \e^{\ri S_0(x)/\veps},
\end{equation*}
where $n_0(x)$ and $S_0(x)$ are independent of $\veps$ and real valued. %and $n_0(x)$ decays to zero sufficiently fast as $|x|\rightarrow \infty$.

%Before delving into the discussion of specific numerical examples, we provide a summary of the numerical algorithm corresponding to the modified FGA formulation in the previous section as follows for later reference and the reader's convenience.

The modified FGA approximates the solution to the \FSE~\eqref{eq:FSE} by \eqref{eq:fga_standard},
%\begin{equation*}
%    \psi^{\veps}_{\FGA}(t,x) 
%    = \frac{1}{(2\pi\veps)^{3d/2}} \int_{\mathbb{R}^{3d}} \e^{\frac{\ri}{\veps} \Phi(t,x,y,q,p)}\,\amp(t,q,p)\,\varphi_0^{\veps}(y) \dby \dbq \dbp,  
%\end{equation*}
%with initial condition $\varphi_0^\veps(x)$. The phase function is given by
%\begin{equation*}
%    \Phi(t,x,y,q,p)=S(t,q,p)+P(t,q,p)\cdot(x-Q(t,q,p))+\frac{\ri}{2}|x-Q(t,q,p)|^2-p\cdot(y-q)+\frac{\ri}{2}|y-q|^2.
%\end{equation*}
%Given $q$ and $p$ as parameters, the evolution of $Q$ and $P$ is governed by the equation of motion corresponding to the modified Hamiltonian 
where the FGA variables $Q$, $P$, $S$, and $a$ satisfy \eqref{eq:hamiltonian-flow}, \eqref{eq:action}, and \eqref{eq:amplitude}, with the modified Hamiltonian
\begin{equation*}
    H=\frac{1}{\alpha} (|P|^2+\delta^2 )^{\alpha /2} + V(Q).
\end{equation*}
The FGA algorithm first decomposes the initial wave into Gaussian functions in the phase space. Subsequently, it propagates the center of each function along the characteristic lines. Finally, the solution is reconstructed through integration over the phase space. This procedure involves using discrete meshes for $x$, $q$, $p$, and $y$ with step size $\Delta x$, $\Delta q$, $\Delta p$, and $\Delta y$ under appropriate strategies.

Lastly, to evolve the ODEs of FGA when the trajectory encounters the singularity at $|P|=0$, we introduce a parameter $\delta$ in the modified FGA formulation. Numerically, we need to determine the appropriate value for $\delta$. According to our theoretical convergence proof, it should take the form $\delta=\varepsilon^k$, where $k>0$ influences the convergence behavior. Our convergence proof gives $k = \frac{7}{12}$. However, this choice may not be optimal. In the experiments presented below, we verify the accuracy and convergence rate of FGA with different $\delta$ values. Notably, the $\delta=\veps$ (\ie, $k=1$) provides the desired numerical accuracy. % and we consistently use $k=1$ as the default value unless otherwise specified.

\subsection{One dimension}

\begin{example} \label{example:one_dim}
    The initial condition is given by
    \begin{equation}
        \varphi_0^\veps(x) = \sqrt{\frac{64}{\pi}} \exp\left(- 64(x- 1)^2 \right) \exp\left( \frac{\ri x}{\veps} \right) , \quad x\in \bbR.
    \end{equation}
    We solve the equation on the $x$-interval $[0, 2]$ with final time $T=0.25$, under the potential
    \begin{equation}
        V(x) = 1 + \cos(\pi x).
    \end{equation}
    %The final time is $T=0.25$.
\end{example}

In this example, we aim to validate FGA solutions' accuracy and convergence behavior. We set the mesh sizes as follows: $\Delta x=\Delta y=\veps$, $\Delta q=O(\sqrt{\veps})$,and  $\Delta p =O(\sqrt{\veps})$. The time step for solving the ODEs using the $4$-th order Runge-Kutta method is $\Delta t=10^{-2}$. The reference solution is computed by the time-splitting spectral method with a mesh size of $\Delta x=\veps$ and a time step of $\Delta t = \veps^2$.
We note that, in this example, we have chosen small mesh sizes for both $p$ and $q$, with a sufficiently large number of grid points. This ensures that the primary source of error in the FGA solution is the asymptotic expansion rather than the initial decomposition, numerical integration of ODEs, or other factors. However, such a fine mesh selection is not strictly necessary for achieving accurate results with FGA. Furthermore, we have confirmed the presence of trajectories $P(t,q,p)$ with distinct initial points $(q,p)$ that pass through zero during the evolution within the specified final time, based on the intermediate value theorem for continuous functions. The existence of such trajectories ensures the meaningfulness of this numerical illustration.

We numerically investigate the convergence rate with respect to $\veps$.
%Note that our chosen values of $\veps$ are all exponential powers of $2$, to analyze the convergence behavior, we take the base-2 logarithm of both the $L^2$ errors of FGA solutions compared to the reference solutions and the corresponding $\veps$ values. 
Specifically, we plot the decay curves of $\log_2(\|\psie_{\FGA} - \psie_{\text{ref}}\|_2)$ versus $-\log_2(\veps)$ for each $\alpha$ in the range of $1.1, 1.3, 1.5, 1.7, 1.9$ in Figure \ref{fig:err_rate_1d}. The slopes of these curves are determined using the least squares method. In the first subfigure of Figure \ref{fig:err_rate_1d}, we set $\delta=\veps$, while in the second subfigure, we set $\delta=\veps^{7/12}$, as suggested by the proof of the convergence theorem \ref{thm:conv-to-FSE}. Remarkably, in the $\delta=\veps$ scenario, the FGA displays a linear decay rate for the $L^2$ errors, which is consistent with the convergence rate observed for the standard Schr\"odinger equation. In contrast, in the $\delta=\veps^{7/12}$ scenario, the logarithm of $L^2$ error decays at a slower rate, indicating that the FGA solution converges, but the inequality bound in our proof is not very tight, leaving room for further improvement. Despite this suboptimal choice for $\delta$, the FGA exhibits desirable numerical properties and convergence rates for practical problems.

\begin{figure}
    \centering
    \subfigure[$\delta=\veps$]{
        \includegraphics[width=0.45\textwidth]{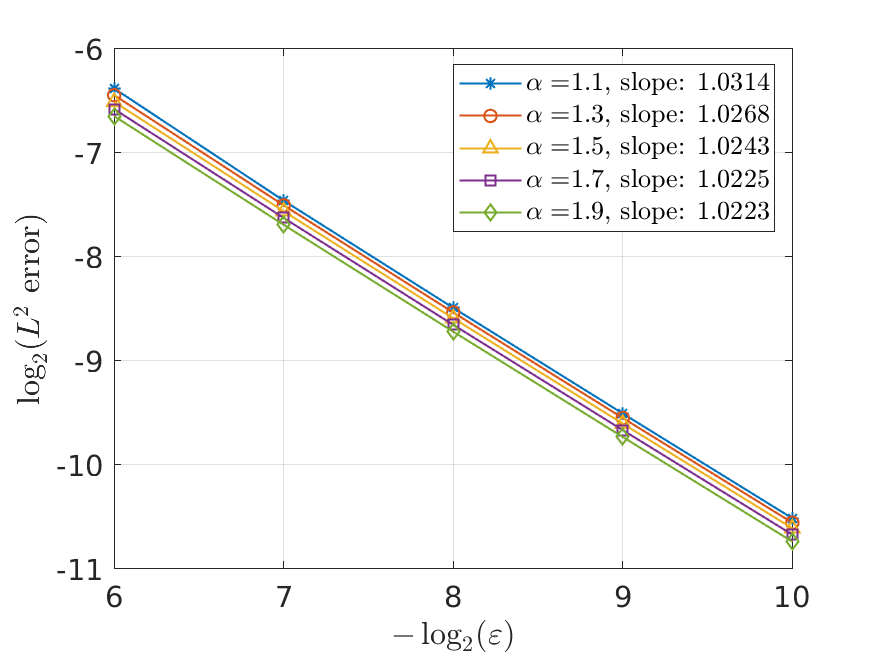}
    }
    \subfigure[$\delta=\veps^{7/12}$]{
        \includegraphics[width=0.45\textwidth]{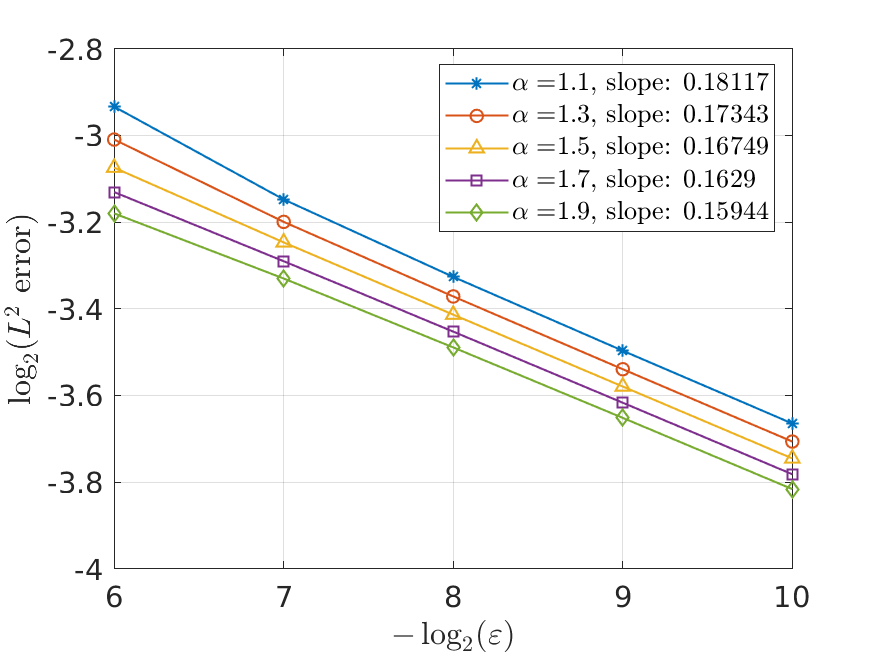}
    }
    \caption{$L^2$ error decay behavior of FGA solution with different $\delta$ values in Example \ref{example:one_dim}.}
    \label{fig:err_rate_1d}
\end{figure}

\subsection{Two dimension}

\begin{example} \label{example:two_dim}
    The initial condition is given by
    \begin{equation}
        \varphi_0^\veps(x_1, x_2) = \frac{64}{\pi} \exp \left(-64\left((x_1 -1)^2 + (x_2 -1)^2)\right)\right) \exp \left( \frac{\ri(x_2 - 1)}{\veps}\right), x_1, x_2 \in \bbR.
    \end{equation}
    We solve the equation on domain $(x_1, x_2)\in [0, 2]^2$ with final time $T=0.25$, under potential
    \begin{equation}
        V(x_1, x_2) = \frac{1}{2} \left( (x_1 - 1)^2 + (x_2 - 1)^2 \right).
    \end{equation}
    %The final Time is $T=0.25$.
\end{example}

In this example, we extend the discussion from Example \ref{example:one_dim} to demonstrate the numerical behavior of FGA in the two-dimensional scenario. We set the mesh sizes as follows: $\Delta x_1=\Delta x_2=\Delta y_1=\Delta y_2 =\veps$, $\Delta q_1=\Delta q_2=O(\sqrt{\veps})$, and $\Delta p_1=\Delta p_2 =O(\sqrt{\veps})$. The time step for solving the ODEs with $4$-th order Runge-Kutta method is $\Delta t=10^{-2}$. The reference solution is computed by the time-splitting spectral method with mesh sizes $\Delta x_1=\Delta x_2=\veps$ and a time step of $\Delta t = \veps^2$.

%Figure \ref{fig:fga_vs_tssa_2d} compares the wave amplitude of the FGA solution and the reference for $\veps=1/2^6$ and $\veps=1/2^7$. The $L^2$ errors between these two solutions with a wider range of parameter values including $\alpha=1.1, 1.3, 1.5, 1.7, 1.9$ and $\veps=1/2^6, 1/2^7, 1/2^8, 1/2^9$ are given in Table \ref{tab:fga_err_2d}. We can observe the accuracy of the FGA solution in two dimensions from these results. 
We plot the decay curves of $\log_2(\|\psie_{\FGA}-\psie_{\text{ref}}\|_2)$ versus $-\log_2(\veps)$ for each $\alpha=1.1$, $1.3$, $1.5$, $1.7$, $1.9$ in Figure \ref{fig:err_rate_2d}. The first subfigure uses $\delta=\veps$ and the second subfigure uses $\delta=\veps^{7/12}$, which is derived from our proof for the convergence theorem \ref{thm:conv-to-FSE}. Similar to the one-dimensional example case, the $\delta=\veps$ scenario displays a linear convergence rate, while the $\delta=\veps^{7/12}$ shows a slower decay rate.

\begin{figure}
    \centering
    \subfigure[$\delta=\veps$]{
        \includegraphics[width=0.45\textwidth]{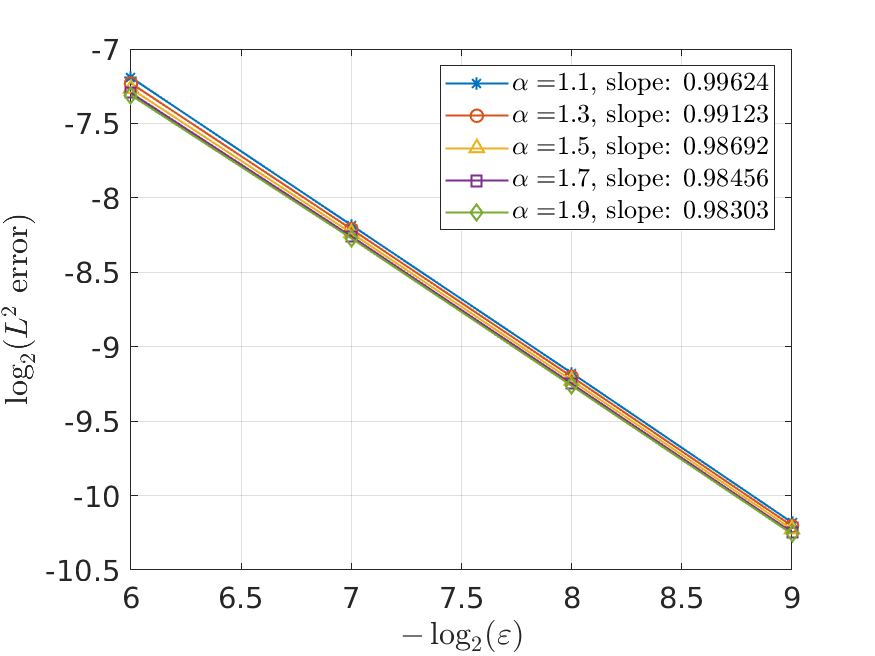}
    }
    \subfigure[$\delta=\veps^{7/12}$]{
        \includegraphics[width=0.45\textwidth]{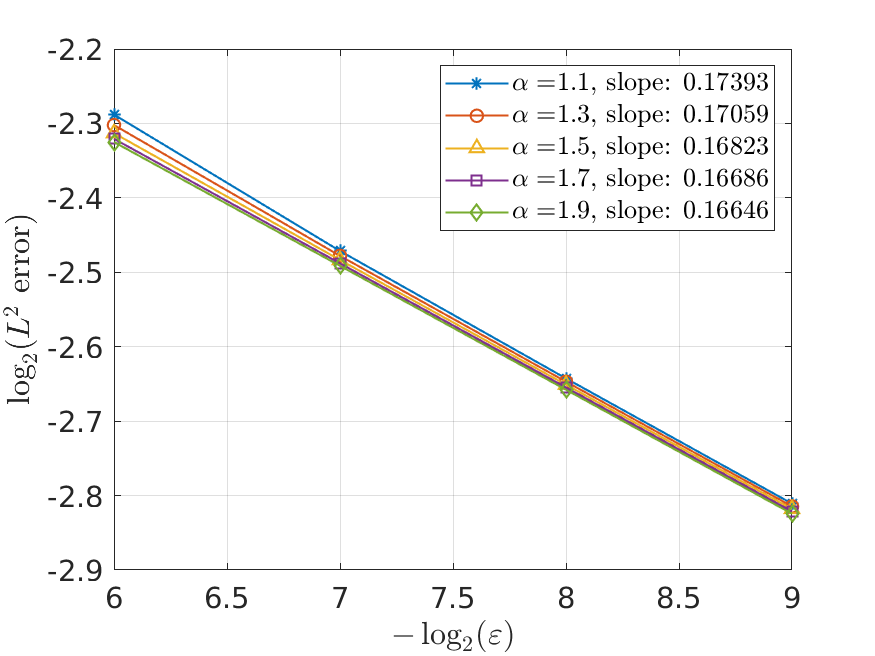}
    }
    \caption{$L^2$ error decay behavior of FGA solution in Example \ref{example:two_dim}.}
    \label{fig:err_rate_2d}
\end{figure}

\section{Conclusion} \label{sec:conclusion_discussion}

In this work, we propose the frozen Gaussian approximation (FGA) for computing the \FSE~in the semi-classical regime. This approach is based on asymptotic analysis and constructs the solution using fixed-width Gaussian functions in the phase space. We develop a modified FGA formulation with a regularization parameter $\delta$ to address the presence of singularities in higher derivatives of the associated Hamiltonian flow. Our primary focus is on deriving the formulations and establishing the rigorous convergence result for the FGA. Additionally, we verify the accuracy of the FGA solutions through several numerical examples and demonstrate the method's linear convergence rate with respect to $\varepsilon$.

\section*{Acknowledgements}
L.C. was partially supported by
the National Key R\&D Program of China No.2021YFA1003001, % and 2020YFA0712503, 
and the NSFC Projects No. 12271537 and 11901601. X.Y. was partially supported by the NSF grant DMS-2109116.

\appendix

\section{Proof of Proposition \ref{prop:bound_F}} \label{app:bounds-F}

\subsection{Parities}
Consider the trajectory of the Hamiltonian flow \eqref{eq:hamiltonian-flow} such that $P(t_0)=0$ and $Q(t_0)=Q_0$ for some $0<t_0<\Tf$.
Let $Q^{(0)}(t)\equiv Q_0$ and $P^{(0)}(t)\equiv 0$, then the Picard's iteration
\begin{align*}
    %\begin{cases}
    %\displaystyle
    Q^{(k+1)}(t) = Q_0 + \int_{0}^t\p_P\kindt\left(P^{(k)}(s)\right)\rd s,
    %\\
    %\displaystyle
    \quad
    P^{(k+1)}(t) = 0 - \int_{0}^t\p_QV\left(Q^{(k)}(s)\right)\rd s,
    %\end{cases}
\end{align*}
produce a sequence of $(Q^{(k)},P^{(k)})$ where $Q^{(k)}$ is even and $P^{(k)}$ is odd w.r.t $t=t_0$, for all $k=0,1,2,...$.   Thus $Q(t)$ and $P(t)$ are respectively even and odd functions in $t$ w.r.t $t=t_0$. In the following discussions we will consider the corresponding auxiliary quantities associated with this given trajectory  $(Q(t), P(t))$, and by default, an even or odd function $f$ means the function is even or odd in time variable $t$ w.r.t. $t=t_0$, \ie, $f(t_0+s)=f(t_0-s)$ for all $s\in\bbR$.

\subsection{The estimate for \texorpdfstring{$F$}{F}}

By differentiating \eqref{eq:hamiltonian-flow}, one get the linear ODE for $F$:
\begin{align} \label{eq:dFdt}
    \ddt{}F= J \nabla^2 H F
\end{align}
with initial condition $F(t=0) = \mathrm{Id}_{2d}$.

According to Prop. \ref{prop:bound_PQ}, there exist constant $c_1$ and $c_2$ such that $c_1(t-t_0)\leq \abs{P(t)}\leq c_2(t-t_0)$ for $t\in[0,\Tf]$. Thus,
\begin{equation}\label{eq:P=t}
    C_1 \left(\delta+\abs{t-t_0}\right)
    % \\
    % \leq
    % C_1\left(\abs{t-t_0}^2+\delta^2\right)^{1/2}
    \leq\left(\abs{P}^2+\delta^2\right)^{1/2}\leq
    % C_2\left(\abs{t-t_0}^2+\delta^2\right)^{1/2}
    % \\
    % \leq 
    C_2\left(\delta+\abs{t-t_0}\right).
\end{equation}
Therefore,  we have
\begin{equation}
    \ddt{}|F(t, q, p)| \leq \abs{\nabla^2H}
    |F(t, q, p)|  \leq C { \left(\delta + \abs{t-t_0}\right)^{{\alpha-2}} } |F(t, q, p)|.
\end{equation}
Since $1<\alpha\leq 2$, integrating both sides of the above inequality gives
\begin{align}
    |F(t, q, p)|\leq \abs{F(0,q,p)}\e^{C|t-t_0|^{\alpha-1}}\leq C.
\end{align}

Let $X$ be the fundamental solution matrix of \eqref{eq:dFdt} such that $X(t_0) = \mathrm{Id}_{2d}$. The same estimate can be applied to $X$ and we find that both $|X|$ and $|F|$ are bounded by $C$ independent of $\delta$. Thus there exist a constant (in time) matrix $Y$ that is bounded independently of $\delta$, such that $F=XY$.
Furthermore, suppose $X=\begin{pmatrix} X_{qq}& X_{qp} \\ X_{pq} & X_{pp}\end{pmatrix}$ is partitioned in a conformal way as $F$ in \eqref{eq:F}, then it is easy to verify that: $X_{qq}$ and $X_{pp}$ are even, while $X_{qp}$ and $X_{pq}$ are odd, noting that $\nabla^2H$ is even w.r.t. $t=t_0$.
These properties will help us to get a fine estimate for $\p F$ in the next subsection.

%\red{Remark by lihui: $X_{ab}$ corresponds to $\p_b A$, the row index corresponds to $Q$ or $P$, and the column index corresponds to $\p_q$ or $\p_p$.}

\subsection{The estimate for \texorpdfstring{$\p F$}{F1}}

Differentiating \eqref{eq:dFdt} gives
\begin{equation} \label{eq:dF1dt}
    \ddt{} \pa F= J \nabla^2 H \,\pa F+J\,\pa\!\nabla^2 H \,F,
\end{equation}
where $\pa$ stands for $\p_{q_j}$ or $\p_{p_j}$, $j= 1,2,...,d$.
Using the fundamental solution matrix $X$, the solution of this system can be written as
\begin{align*}%\label{eq:dF1dt_solution}
    \pa F(t) & = X\int_{0}^tX^{-1}(s)J\,\pa\!\nabla^2H(s)\,F(s)\rd s
    = X\int_{0}^t J^T X^{T}(s)J\,J\,\pa\!\nabla^2H(s)\,F(s)\rd s  \nonumber                                                           \\
             & = X\,J^T\int_{0}^t X^{T}(s)\,\pa\!\nabla^2H(s)\,F(s)\rd s = X\,J^T\int_{0}^t X^{T}(s)\,\pa\!\nabla^2H(s)\,X(s)\rd s\,Y
    \label{eq:dF1dt_solution},
\end{align*}
where for the second equality we have used the symplecticity of $X$.

Let $G(s)=X^{T}(s)\,\pa\!\nabla^2H(s)\,X(s)$, then
\begin{align}
    G = X^T\,\begin{pmatrix}
                 \pa Q \, \p_Q^3V & \\ & 0
             \end{pmatrix}\, X
    +X^T\,\begin{pmatrix}
              0 & \\ & \pa P \, \p_P^3\kindt
          \end{pmatrix}\, X =:G_{\text{bdd}} + G_{\delta}.
\end{align}

The previous subsection has shown that $X$ and $F$ are bounded independent of $\delta$, then  it implies
\begin{align}\label{eq:G_bdd_bound}
    %\abs{X(t)\,J\int_0^t G_{\text{bdd}} \,\rd s\,Y} =\abs{X(t)\,J\int_0^t  X^T\,\begin{pmatrix}  \pa Q \, \p_Q^3V & \\ & 0 \end{pmatrix}\, X\, \rd s\,Y} \leq C
    \int_0^t \abs{ G_{\text{bdd}} } \,\rd s =\int_0^t  \abs{ X^T\,\begin{pmatrix}  \pa Q \, \p_Q^3V & \\ & 0 \end{pmatrix}\, X }\, \rd s \leq C
\end{align}
under the assumption that $V$ is smooth.

On the other hand, since $\pa P(t) = X_{pp}^T(t)\,Y_{pa} +  X_{pq}^T(t)\,Y_{qa}$ with $X_{pp}$ and $X_{pq}$ being even and odd respectively, we can split $G_{\delta}$ into its even and odd parts by $G_{\delta}= G_{\delta,\text{e}} + G_{\delta,\text{o}}$ where
\begin{align*}
    G_{\delta,\text{e}} & =
    \begin{pmatrix}
        X_{pq}^T\,Y_{qa}^TX_{pq}^T\, \p_P^3\kindt\, X_{pq} & X_{pq}^T\,Y_{pa}^TX_{pp}^T\, \p_P^3\kindt\, X_{pp} \vspace{3pt} \\
        X_{pp}^T\,Y_{pa}^TX_{pp}^T\, \p_P^3\kindt\, X_{pq} & X_{pp}^T\,Y_{qa}^TX_{pq}^T\, \p_P^3\kindt\, X_{pp}
    \end{pmatrix}, \quad\text{and}
    \\
    G_{\delta,\text{o}} & =
    \begin{pmatrix}
        X_{pq}^T\,Y_{pa}^TX_{pp}^T\, \p_P^3\kindt\, X_{pq} & X_{pq}^T\,Y_{qa}^TX_{pq}^T\, \p_P^3\kindt\, X_{pp} \vspace{3pt} \\
        X_{pp}^T\,Y_{qa}^TX_{pq}^T\, \p_P^3\kindt\, X_{pq} & X_{pp}^T\,Y_{pa}^TX_{pp}^T\, \p_P^3\kindt\, X_{pp}
    \end{pmatrix}.
\end{align*}
For the even part $G_{\delta,\text{e}}$, we observe that each of its blocks has at least one $X_{pq}$ factor. We know that $X_{pq}$ is odd and thus $X_{pq}(t_0)=0$. In addition, by \eqref{eq:dFdt},
\begin{align*}
    \abs{\ddt{X_{pq}}} \leq \abs{\p_Q^2 V}\,\abs{X_{qq}} \leq C, \quad\text{for all }t\in[0,\Tf],
\end{align*}
where for the last inequality we have used the fact that $X_{qq}$ is bounded and $V$ is assumed to be smooth independent of $\delta$.
Thus
\begin{align*}
    \abs{X_{pq}(t)}\leq C\abs{t-t_0}.
\end{align*}
Then
\begin{align}
    \int_0^t\abs{G_{\delta,\text{e}}(s)}\rd s
     & \leq C\int_0^t\abs{(s-t_0)\p_P^3\kindt(s)}\rd s
    \leq C\int_0^t\abs{(s-t_0)} \left(\delta+\abs{s-t_0}\right)^{\alpha-3}\rd s
    \nonumber                                                        \\
     & \leq C\int_0^t\left(\delta+\abs{s-t_0}\right)^{\alpha-2}\rd s
    \leq C, \quad \text{ for } 1<\alpha<2.
    \label{eq:G_even_bound}
\end{align}
%\red{!!!The above estimate valid for the first three entries of $G_{\delta,\text{o}}$, but the last one has no $X_{pq}$ factor!!!}
For the odd part $G_{\delta,\text{o}}$, the $(p,p)$-block does not include a $X_{pq}$ factor, which makes the above estimate invalid. A practical upper bound for $G_{\delta,\text{o}}$ is
$$
    \abs{ G_{\delta,\text{o}} } \leq C \abs{\p_P^3\kindt} \leq C\left(\delta+\abs{s-t_0}\right)^{\alpha-3}.
$$
For $t\leq t_0$,
\begin{align*}
    \abs{ \int_0^t G_{\delta,\text{o}}(s)\,\rd s }
     & \leq C \int_0^t \left(\delta+\abs{s-t_0}\right)^{\alpha-3}\,\rd s
    = C \int_0^t \left(\delta+t_0-s\right)^{\alpha-3}\,\rd s
    \\&\leq C\left(\left(\delta+t_0-t\right)^{\alpha-2}-\left(\delta+t_0\right)^{\alpha-2}\right).
\end{align*}
For $t>t_0$, using the fact that $G_{\delta,\text{o}}$ is odd,
\begin{align*}
    \abs{ \int_0^t G_{\delta,\text{o}}(s)\,\rd s }
     & =\abs{ \int_0^{2t_0-t} G_{\delta,\text{o}}(s)\,\rd s }
    \leq C\left(\left(\delta+t-t_0\right)^{\alpha-2}-\left(\delta+t_0\right)^{\alpha-2}\right).
\end{align*}
Thus for any $t\in[0,\Tf]$, we have
\begin{align}\label{eq:G_odd_bound}
    \abs{ \int_0^t G_{\delta,\text{o}}(s)\,\rd s }
    \leq C\left(\left(\delta+\abs{t-t_0}\right)^{\alpha-2}-\left(\delta+t_0\right)^{\alpha-2}\right).
\end{align}
Taking the estimates \eqref{eq:G_bdd_bound}, \eqref{eq:G_even_bound}, and \eqref{eq:G_odd_bound} together, from \eqref{eq:dF1dt_solution} we obtain
\begin{align}\label{eq:F1bound}
    \abs{\pa F(t)}\leq C \abs{ \int_0^t G_(s)\,\rd s }
    \leq C\left(1+\left(\delta+\abs{t-t_0}\right)^{\alpha-2}\right)\leq C\left(1+\left(\abs{P}^2+\delta^2\right)^{\frac{\alpha-2}{2}}\right).
\end{align}

% \subsection{The estimate for \texorpdfstring{$\p^2 F$}{F1}}

% Differentiating \eqref{eq:dF1dt} gives
% \begin{equation} \label{eq:dF2dt}
% 	\ddt{} \pab F= J \nabla^2 H \,\pab F+J\,\pa\!\nabla^2 H \,\pb F+J\,\pb\!\nabla^2 H \,\pa F+J\,\pab\!\nabla^2 H \,F,
% \end{equation}
% where $\pa$ and $\pb$ stand for $\p_{q_j}$ or $\p_{p_j}$, $j= 1,2,...,d$, and $\pab=\pa\pb$.
% The solution of this system can be written as
% \begin{equation}\label{eq:F2}
% 	\pab F(t) = X(t)\int_{0}^t\left(G_1(s)+G_2(s)+G_3(s)\right)\rd s,
% \end{equation}

% Let $G^1(s)=X^{T}(s)\,\pab\!\nabla^2H(s)\,X(s)$,
% then
% \begin{align}
% 	G^1 = X^T\,\begin{pmatrix}
% 		           \pa Q \, \p_Q^4V \left[\pb Q\right]^T + \pab Q \, \p_Q^3V &
% 		           \\ & \pa P \, \p_P^4\kindt \left[\pb P\right]^T + \pab P \, \p_P^3\kindt
% 	           \end{pmatrix}\, X
% \end{align}

\bibliographystyle{siamplain}
\bibliography{refs}

\end{document}